\newcommand{\wh}{\widehat}
\newcommand{\ispa}[1]{\langle \,#1 \,\rangle }
\newcommand{\ol}{\overline}
\newcommand{\mb}{\mathbb}
\newcommand{\fcal}{\mathcal{F}}
\newcommand{\hcal}{\mathcal{H}}
\newcommand{\rcal}{\mathcal{R}}
\newcommand{\ucal}{\mathcal{U}}
\newcommand{\re}{{\rm Re}\,}
\newcommand{\spec}{{\rm spec\hspace{0.1mm}}}
\newcommand{\dsp}{\displaystyle}
\newcommand{\Hom}{{\rm Hom}}
\newcommand{\iu}{\sqrt{-1}}
\newtheorem{thm}{{\sc Theorem}}[section]
\newtheorem{cor}[thm]{{\sc Corollary}}
\newtheorem{lem}[thm]{{\sc Lemma}}
\newtheorem{prop}[thm]{{\sc Proposition}}
\newtheorem{defin}[thm]{{\sc Definition}}
\begin{document}

\title[Eigenvalues, absolute continuity and localizations]
{Eigenvalues, absolute continuity and localizations \\
for periodic unitary transition operators}
\author{Tatsuya Tate}
\address{Mathematical Institute, Graduate School of Sciences, Tohoku University, 
Aoba, Sendai 980-8578, Japan. }
\email{tate@m.tohoku.ac.jp}
\thanks{The author is partially supported by JSPS Grant-in-Aid for Scientific Research (No. 25400068, No. 24340031).}
\date{\today}

\renewcommand{\thefootnote}{\fnsymbol{footnote}}
\renewcommand{\theequation}{\thesection.\arabic{equation}}
\renewcommand{\labelenumi}{{\rm (\arabic{enumi})}}
\renewcommand{\labelenumii}{{\rm (\alph{enumii})}}
\numberwithin{equation}{section}

\begin{abstract}
The localization phenomenon for periodic unitary transition operators on a Hilbert space consisting of square summable functions 
on an integer lattice with values in a finite dimensional Hilbert space, 
which is a generalization of the discrete-time quantum walks with 
constant coin matrices, are discussed. 
It is proved that a periodic unitary transition operator has an eigenvalue if and only if the corresponding 
unitary matrix-valued function on a torus has an eigenvalue which does not depend on the points on the torus. 
It is also proved that the continuous spectrum of a periodic unitary transition operators is absolutely continuous. 
As a result, it is shown that the localization happens if and only if there exists an eigenvalue, and 
when there exists only one eigenvalue, the long time limit of transition probabilities 
coincides with the point-wise norm of the projection of the initial state to the eigenspace. 
The results can be applied to certain unitary operators on a Hilbert space 
on a covering graph, called a topological crystal, over a finite graph. 
An analytic perturbation theory for matrices in several complex variables is 
employed to show the result about absolute continuity for periodic unitary transition operators. 
\end{abstract}

\maketitle

\section{Introduction}\label{INTRO}

The term {\it discrete-time quantum walks} is recently used to mean probability distributions, 
which is called transition probabilities, defined by using unitary operators. In this context, 
the unitary operators are often called unitary transition operators or unitary time-evolutions. 
Originally the specific class of unitary operators in typical models are formulated 
in quantum physics (\cite{ADZ}) and computer science (\cite{AAKV},\cite{ABNVW}, \cite{NV}). 
As in the theory of usual random walks, clarifying the asymptotic behavior,
as time goes to infinity, of the transition probability is one of main issues for the investigation of quantum walks. 
It is well-known that asymptotic behavior of quantum walks in typical models 
are quite different from that of usual random walks. The weak-limit distributions of quantum walks are 
usually ballistic (\cite{Ko1}, \cite{WKKK}), and the pointwise asymptotics is also quite different 
from that of the classical random walks (\cite{ST}). 
One of peculiar properties of quantum walks is a {\it localization}, which is a phenomenon 
that the transition probabilities, at some points, do not tend to zero as time goes to infinity. 
Interesting is that the localization phenomenon happens for quantum walks with very simple classes of 
unitary transition operators, even on the one-dimensional integer lattice, as discussed in \cite{IKK}, \cite{IK}, \cite{IKS}. 
There are numerous literatures on the localization phenomenon for quantum walks and it 
would be hard to mention all of them, and thus we just refer \cite{KOS} 
where the localization properties of a quantum walk on certain infinite graphs called spidernets, 
not on the usual integer lattices, is discussed. 

To be more concrete, 
let $Z$ be a countable set and let $W$ be a finite dimensional Hilbert space 
with the inner product $\ispa{\cdot,\cdot}_{W}$. 
Let $U$ be a unitary operator on the Hilbert space $\ell^{2}(Z,W)$ 
consisting of all square summable functions on $Z$ with values in $W$ with the inner product defined by 
\begin{equation}\label{inn1}
\ispa{f,g}=\sum_{x \in Z}\ispa{f(x),g(x)}_{W} \quad (f,g \in \ell^{2}(Z,W)). 
\end{equation}
For a non-negative integer $n$, a non-zero vector $u \in \ell^{2}(Z,W)$ and a point $x \in Z$, 
we define the quantity $p_{n}(u;x)$ by 
\begin{equation}\label{T1}
p_{n}(u;x)=\|(U^{n}u)(x)\|_{W}^{2}, 
\end{equation}
where $\|\cdot \|_{W}$ denotes the norm on $W$ defined by the fixed Hermitian inner product. 
It is straightforward to see that the sum of $p_{n}(u;x)$ over all $x \in Z$ equals $\|u\|^{2}$, 
the norm square of $u$ in $\ell^{2}(Z,W)$, 
and hence $p_{n}(u;x)$ defines a probability distribution on $V$ when $\|u\|=1$. 
For this reason, we call, in this note, 
$p_{n}(u;x)$ the transition probability for the unitary transition operator $U$ with the initial state $u$ 
even when $u$ is not a unit vector. 

\begin{defin}\label{LOC}
The transition probability of the unitary transition operator $U$ 
with initial state $u$ is said to be localized at 
a point $x \in Z$ if $\dsp \limsup_{n \to \infty}p_{n}(u;x) >0$. 
\end{defin}

In this paper, we mainly consider the integer lattice $\mb{Z}^{d}$ of rank $d \geq 1$ 
for the set $Z$ and the {\it periodic unitary transition operators} on $H=\ell^{2}(\mb{Z}^{d},W)$ 
for the unitary operator $U$, which is defined as follows. 
For each $y \in \mb{Z}^{d}$ and $\phi \in W$, 
we define $\delta_{y} \otimes \phi \in H$ by the following formula. 
\[
(\delta_{y} \otimes \phi)(x)=
\begin{cases}
\phi & (\mbox{when $x=y$}), \\
0 & (\mbox{otherwise}). 
\end{cases}
\]
\begin{defin}\label{UTOD}
A unitary operator $U$ on $H=\ell^{2}(\mb{Z}^{d},W)$ is said to be a periodic unitary transition operator 
if the following two conditions are satisfied. 
\begin{itemize}
\item The unitary operator $U$ commutes with the natural action of the abelian group $\mb{Z}^{d}$ on $H$. 
\item There exists a finite set $S \subset \mb{Z}^{d}$, called the set of steps, 
such that for any $x \in \mb{Z}^{d}$, $y \in \mb{Z}^{d} \setminus (x +S)$ and any $\phi \in W$, we have 
$U(\delta_{x}\otimes \phi)(y)=0$.
\end{itemize}
\end{defin}
It is obvious that unitary transition operators in the usual model of discrete-time quantum walks 
with constant coin matrices and products of finite number of such operators are periodic unitary transition operators. 
To analyze periodic unitary transition operators, It is natural to use the Fourier transform 
because of their invariance under the action of $\mb{Z}^{d}$. 
Let $T^{d}$ be the $d$-dimensional torus in $\mb{C}^{d}$ defined by 
\[
T^{d}=\{z=(z_{1},\ldots,z_{d}) \in \mb{C}^{d}\,;\,|z_{j}|=1\ (j=1,\ldots,d)\}. 
\]
Let $\hcal=L^{2}(T^{d},W)$ be the Hilbert space consisting of all square integrable 
functions on $T^{d}$ (with respect to the Lebesgue measure on $T^{d}$)  with values in $W$. 
The normalized Lebesgue measure on $T^{d}$ is denoted by $\nu_{d}$. 
The inner product on $\hcal$ is defined in a usual manner with the Hermitian 
inner product on $W$. Let $\fcal:\hcal \to H$ be the Fourier transform defined by 
\begin{equation}\label{FT1}
\fcal(f)(x)=\int_{T^{d}} z^{x}  f(z)\,d\nu_{d}(z) \quad (f \in \hcal), 
\end{equation}
where we write $z^{x}=z_{1}^{x_{1}} \cdots z_{d}^{x_{d}}$ for a point 
$z=(z_{1},\ldots,z_{d})$ in the complex torus $T_{\mb{C}}^{d}=(\mb{C} \setminus \{0\})^{d}$ 
and a lattice point $x=(x_{1},\ldots,x_{d})$ in $\mb{Z}^{d}$. 
The Fourier transform $\fcal$ is a unitary operator with the inverse $\fcal^{*}=\fcal^{-1}:H \to \hcal$ given by 
\begin{equation}
(\fcal^{*}g)(z)=\sum_{x \in \mb{Z}^{d}} g(x)z^{-x} \quad (z \in T^{d}). 
\end{equation}
For a periodic unitary transition operator $U$, we define a unitary operator $\ucal$ on $\hcal$ by $\ucal=\fcal^{*} U \fcal$. 
Then the unitary operator $\ucal$ can be written in the form
\begin{equation}\label{UT1}
(\ucal f)(z)=\wh{U}(z)f(z) \ \ (f \in \hcal,\ z \in T^{d}) 
\end{equation}
with a unitary operator $\wh{U}(z)$ on the finite dimensional vector space $W$ given by 
\begin{equation}
\wh{U}(z)\phi=\ucal(1 \otimes \phi)(z)\quad (\phi \in W), 
\end{equation}
where, for a scalar function $f \in L^{2}(T^{d})$ and a vector $\phi \in W$, we define $f \otimes \phi \in \hcal$ by 
\[
(f \otimes \phi)(z)=f(z)\phi \quad (z \in T^{d}). 
\]
Our main theorem is then described as follows. 
\begin{thm}\label{main0}
For a periodic unitary transition operator $U$ on $\ell^{2}(\mb{Z}^{d},W)$, we have the following. 
\begin{enumerate}
\item $U$ has an eigenvalue $\omega$ if and only if the matrix-valued function $\wh{U}(z)$ on $T^{d}$ has 
an eigenvalue $\omega$ for all points $z$ in $T^{d}$. Hence the number of eigenvalues of $U$ is finite. 
\item The continuous spectrum of $U$ is absolutely continuous with respect to 
the Lebesgue measure on the unit circle $S^{1}$ in $\mb{C}$. 
\end{enumerate}
\end{thm}

\noindent Therefore, by the absolute continuity of the continuous spectrum, we have 

\begin{cor}\label{cor1}
The periodic unitary transition operator $U$ has no eigenvalues 
if and only if we have 
$\dsp \limsup_{n \to \infty}p_{n}(u;x)=0$ for any $u \in \ell^{2}(\mb{Z}^{d},W)$ and $x \in \mb{Z}^{d}$. 
Therefore, the localization phenomenon does not happen to the quantum walks 
defined by such unitary operators. 
\end{cor}

\noindent A simple observation shows the following. 

\begin{cor}\label{cor2}
Suppose that the periodic unitary transition operator $U$ has 
only one eigenvalue $\omega$, and let $\pi$ be the orthogonal projection onto the eigenspace 
corresponding to the eigenvalue $\omega$. 
Then, for any $u \in H$ and $x \in \mb{Z}^{d}$, the limit $\displaystyle \lim_{n \to \infty}p_{n}(u;x)$ exists and 
the following formula holds. 
\begin{equation}\label{TAF1}
\lim_{n \to \infty}p_{n}(u;x)=\|(\pi u)(x)\|_{W}^{2}.
\end{equation}
\end{cor}

It would be necessary here to give some remarks on these results. 
First, it was pointed out to the author by professor Alain Joye that 
the singular continuous spectrum of (general unitary operator) $U$ does not affect 
the {\it long-time average} of the transition probabilities, 
\begin{equation}\label{TA1}
\ol{p}(u;x)=\lim_{N \to \infty}\frac{1}{N}\sum_{n=1}^{N} p_{n}(u;x). 
\end{equation}
Indeed, let $U$ be a general unitary operator on $\ell^{2}(Z, W)$ (for a general countable set $Z$) and 
let $\pi_{\lambda}$ be the orthogonal projection onto the eigenspace corresponding to 
an eigenvalue $\lambda$ of $U$. 
Then, Wiener's formula (\cite{Kat}) tells us that the limit $\ol{p}(u;x)$ exists and satisfies
\begin{equation}\label{Wiener1}
\ol{p}(u;x)=\sum_{\mbox{{\tiny $\lambda$: eigenvalue of $U$}}}\|(\pi_{\lambda}u)(x)\|_{W}^{2}. 
\end{equation}
From this it is obvious that if $U$ has an eigenvalue then a localization takes place at some 
point $x \in Z$ and an initial state $u \in \ell^{2}(Z, W)$. 
In Corollary \ref{cor2}, we take the long-time limit (but not the average) itself of the transition probability 
under the assumption that $U$ has only one eigenvalue. For example, it is well-known that 
the three-state Grover walk on $\mb{Z}$ and the square of the two-dimensional Grover walk satisfy this assumption. 

The absolute continuity of continuous spectrum for certain class of self-adjoint operators 
has been obtained in the work of G\'{e}rard-Nier \cite{GN}. 
The theory developed in \cite{GN} is very powerful, and 
it might be possible to apply this theory to the generator of the periodic unitary transition operators. 
However, taking a logarithm of the periodic unitary transition operators would cause singularities 
of the function corresponding to $\wh{U}$ introduced above and hence it does not seem so straightforward to apply it. 
Since our situation is rather easier, 
it would be reasonable to give a direct and constructive proof of absolute continuity. 

Since in the setting up described above the vector space $W$ can be arbitrarily chosen, 
Theorem $\ref{main0}$ is applicable to certain unitary operators on topological crystal over finite graphs. 
As is discussed in Section $\ref{LGU}$, some unitary transition operators are defined over 
the set of oriented edges. But, the set of vertices, as in the classical case, might be used to 
formulate quantum system over a topological crystal. Therefore, we use a setting described as follows. 
As before, let $Z$ be a countable set and let $W_{0}$ be a finite dimensional Hilbert space with the 
inner product $\ispa{\cdot,\cdot}_{W_{0}}$. Suppose that a free abelian group $\Gamma$ 
of finite rank $d$ acts on $Z$ freely and the quotient set $Z_{0}=Z/\Gamma$ is finite. 
Let $p:Z \to Z_{0}$ be the natural projection. 
Identifying the finite set $Z_{0}$ with a fixed fundamental set $F_{0}$ of the action of $\Gamma$, 
we may regard $Z_{0}$ as a subset of $Z$. 
Let $C(Z,W_{0})$ be the vector space of all function from $Z$ to $W_{0}$. 
Let $U$ be a unitary operator on the Hilbert space $\ell^{2}(Z,W_{0})$ satisfying the following two conditions. 
\begin{enumerate}
\item[(i)] $U$ commutes with the action of $\Gamma$ on $\ell^{2}(Z,W_{0})$. 
\item[(ii)] There exists a finite set $S \subset \Gamma$ such that, 
for any $x_{0}, y_{0} \in Z_{0}$, $\alpha \in \Gamma$, $\beta \in \Gamma\setminus \alpha S$ and $\phi \in W_{0}$, 
we have $U(\delta_{\alpha x_{0}} \otimes \phi)(\beta y_{0})=0$. 
\end{enumerate}
Here we note that the group operation on $\Gamma$ is written as a multiplication because 
it would be better to describe the action of $\Gamma$ on $Z$. 
The condition (ii) means that if the quantum walker with the time evolution $U$ start at 
a point in the fundamental set, $\alpha F_{0}$, the walker can arrive in one step at points in  
$\bigcup_{\gamma \in S}\gamma \alpha F_{0}$. Thus, the finite set $S$ depends on the choice of the fundamental set $F_{0}$. 
Let $\wh{\Gamma}$ be the group of $U(1)$-character on $\Gamma$. 
For each $\chi \in \wh{\Gamma}$, we set
\[
C_{\chi}(Z,W_{0})=\{f:Z \to W_{0} \,;\, f(\alpha x)=\chi(\alpha) f(x) \ (x \in Z, \alpha \in \Gamma)\}. 
\]
The space $C_{\chi}(Z,W_{0})$ is finite dimensional and is isomorphic to $\ell^{2}(Z_{0},W_{0})$, 
and it is equipped with a natural Hermitian inner product inherited from $\ell^{2}(Z_{0},W_{0})$. 
By the assumption (i), (ii), the operator $U$ is continuous with respect to the topology of point-wise 
convergence, and thus it defines a linear map on the space of all $W_{0}$-valued function on $Z$. 
For each $\chi \in \wh{\Gamma}$, we define a unitary operator $U_{\chi}$ on $C_{\chi}(Z,W_{0})$ 
as the restriction of $U$ to $C_{\chi}(Z,W_{0})$.

\begin{cor}\label{cor3}
Let $U$ be a unitary operator on $\ell^{2}(Z,W_{0})$ satisfying the above two conditions 
and, for $\chi \in \wh{\Gamma}$, let $U_{\chi}$ be the unitary operator on $C_{\chi}(Z,W_{0})$ defined as above. 
Then we have the following. 
\begin{enumerate}
\item $U$ has an eigenvalue $\omega$ if and only if the unitary operator $U_{\chi}$ on $C_{\chi}(Z,W_{0})$ has 
an eigenvalue $\omega$ for all points $\chi \in \wh{\Gamma}$. Hence the number of eigenvalues of $U$ is finite. 
\item The continuous spectrum of $U$ is absolutely continuous with respect to 
the Lebesgue measure on the unit circle $S^{1}$ in $\mb{C}$. 
\end{enumerate}
Hence the statements in Corollaries $\ref{cor1}$, $\ref{cor2}$ holds true for $U$. 
\end{cor}

Organization of the present paper is as follows. First, in Section $\ref{LGU}$, 
the corollaries are proved. Main reason for applicability of Theorem $\ref{main0}$ 
to the setting-up in Corollary $\ref{cor3}$ is that, under the assumption (i), (ii), 
the operator $U$ becomes a periodic unitary transition operator on $\ell^{2}(Z,W_{0}) \cong \ell^{2}(\Gamma,W)$ 
with $W=\ell^{2}(Z_{0},W_{0})$. An example, the Grover walks on topological crystals will be 
explained in Section $\ref{LGU}$. 
In Section $\ref{UTO}$, some simple but important properties of 
periodic unitary transition operators are summarized. The item (1) in Theorem $\ref{main0}$ is an easy 
consequence of the formula $\eqref{UT1}$, but its proof is written, for completeness of the presentation, 
in Section $\ref{EIG}$. In Section $\ref{ABS}$, absolute continuity of the continuous spectrum is discussed. 
In this section we employ the analytic perturbation 
theory developed by Baumg\"{a}rtel (\cite{Ba}) and use a coarea formula to 
construct a density function of the spectral measures.

\vspace{10pt}

\noindent{\bf Acknowledgments.}\hspace{3pt}
The author would like to thank professor Alain Joye for his comments about
the Wiener formula, professor Yusuke Higuchi for 
the comments on the work \cite{HN} where a lemma essentially the same as our Lemma \ref{analN} 
below is used in a similar purpose, and professor Serge Richard for stimulate discussion.

\section{Proofs of corollaries}\label{LGU}
Let $Z$ be, as in Section $\ref{INTRO}$, a countable set and let $W$ be a complex vector space of dimension $D$ 
with a fixed Hermitian inner product $\ispa{\cdot, \cdot}_{W}$. 
Let $U$ be a unitary operator on the Hilbert space $H=\ell^{2}(Z,W)$ with the inner product given by $\eqref{inn1}$. 
Let $\spec(U)$ and $\spec(U)_{p}$ denote the spectrum and the set of eigenvalues of $U$, respectively. 
Since $U$ is unitary, $\spec(U)$ is contained in the unit circle $S^{1}$ in the complex plane, 
and the spectral resolution of $U$ takes the form 
\begin{equation}
U=\int_{S^{1}}\lambda\,dE(\lambda), 
\end{equation}
where $E$ is a projection-valued measure on $S^{1}$. 
If $\omega \in \spec(U)_{p}$, then $\pi_{\omega}=E(\{\omega\})$ is the projection onto 
the eigenspace corresponding to $\omega$. 
We say that $u \in H$ has absolutely continuous spectral measure if the measure $\|E(\cdot)u\|^{2}$ 
is absolutely continuous with respect to the Lebesgue measure on the circle. 
The following lemma can be proved by using the Riemann-Lebesgue lemma. 
\begin{lem}\label{CS1}
Let $u \in H$. Suppose that $u$ has absolutely continuous spectral measure. 
Then, for any $x \in Z$, we have $\dsp \lim_{n \to \infty}p_{n}(u;x)=0$. 
\end{lem}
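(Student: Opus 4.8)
The plan is to reduce the vector-valued quantity $p_{n}(w;x)=\|U^{n}w(x)\|_{\mb{C}^{D}}^{2}$ to finitely many scalar Fourier coefficients of complex spectral measures, and then to apply the Riemann--Lebesgue lemma. Fix an orthonormal basis $e_{1},\ldots,e_{D}$ of $\mb{C}^{D}$. By the definition of the inner product $\eqref{inn1}$, the $i$-th component of $U^{n}w(x)$ is $\ispa{U^{n}w,\,\delta_{x}\otimes e_{i}}$, so that
\[
p_{n}(w;x)=\sum_{i=1}^{D}\left|\ispa{U^{n}w,\,\delta_{x}\otimes e_{i}}\right|^{2}.
\]
Since $D$ is finite, it suffices to prove that $\ispa{U^{n}w,\phi}\to 0$ as $n\to\infty$ for each fixed $\phi\in H$, applied with $\phi=\delta_{x}\otimes e_{i}$.

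Next I would invoke the spectral resolution $U=\int_{S^{1}}\lambda\,dE(\lambda)$. Writing $\mu_{w,\phi}(B)=\ispa{E(B)w,\phi}$ for the associated finite complex measure on $S^{1}$, the spectral theorem gives
\[
\ispa{U^{n}w,\phi}=\int_{S^{1}}\lambda^{n}\,d\mu_{w,\phi}(\lambda).
\]
Parametrizing $S^{1}$ by $\lambda=e^{\iu\,\theta}$ exhibits this integral as the $n$-th Fourier coefficient of $\mu_{w,\phi}$, so that the desired decay is precisely a Riemann--Lebesgue statement, provided $\mu_{w,\phi}$ is absolutely continuous with respect to Lebesgue measure on $S^{1}$.

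The key step is to deduce the absolute continuity of the off-diagonal measure $\mu_{w,\phi}$ from that of the diagonal spectral measure $\mu_{w}(B)=\ispa{E(B)w,w}$, which is absolutely continuous by hypothesis. Using $E(B)=E(B)^{2}=E(B)^{*}E(B)$ and the Cauchy--Schwarz inequality, I obtain
\[
|\mu_{w,\phi}(B)|=|\ispa{E(B)w,\,E(B)\phi}|\leq \|E(B)w\|\,\|\phi\|=\mu_{w}(B)^{1/2}\,\|\phi\|,
\]
using $\|E(B)w\|^{2}=\ispa{E(B)w,w}=\mu_{w}(B)$. Consequently, any Lebesgue-null set is $\mu_{w}$-null, hence $\mu_{w,\phi}$-null, so $\mu_{w,\phi}$ is absolutely continuous. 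Writing $d\mu_{w,\phi}=g\,d\theta$ with $g\in L^{1}(S^{1})$, the Riemann--Lebesgue lemma yields $\ispa{U^{n}w,\phi}=\int_{S^{1}}e^{\iu\,n\theta}g(\theta)\,d\theta\to 0$, and summing the finitely many squared terms completes the argument.

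I expect the main obstacle to be exactly this passage from the absolute continuity of $\mu_{w}$ to that of the measures $\mu_{w,\phi}$: the projection identity $E(B)=E(B)^{*}E(B)$ together with Cauchy--Schwarz is what controls the off-diagonal measure by the diagonal one. Once the problem has been rephrased in terms of Fourier coefficients, the remaining steps are routine.
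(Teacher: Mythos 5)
Your proposal is correct and follows essentially the same route as the paper's proof: reduce $p_{n}(w;x)$ to the finitely many scalar quantities $\ispa{U^{n}w,\delta_{x}\otimes e_{i}}$, deduce absolute continuity of the off-diagonal spectral measure from that of $\|E(\cdot)w\|^{2}$ via Cauchy--Schwarz, and conclude with Radon--Nikodym and the Riemann--Lebesgue lemma. The only immaterial difference is that the paper inserts the projection $\pi_{ac}$ onto the absolutely continuous subspace into its Cauchy--Schwarz estimate, whereas your direct bound $|\ispa{E(B)w,\phi}|\leq\|E(B)w\|\,\|\phi\|$ already suffices.
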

\noindent Using Lemma $\ref{CS1}$ it is easy to show the following. 
\begin{lem}\label{EG1}
Suppose that a unitary operator $U$ on $H$ has only one eigenvalue 
and that the continuous spectrum of $U$ is absolutely continuous. 
Let $\pi$ be the orthogonal projection onto the eigenspace of $U$. 
Then we have $\displaystyle \lim_{n \to \infty}p_{n}(u;x)=\|(\pi u)(x)\|_{W}^{2}$ for any 
$u \in H$ and $x \in Z$. 
\end{lem}
\begin{proof}
The transition probability $p_{n}(u;x)$ can be written as 
\begin{equation}\label{decT1}
p_{n}(u;x)=p_{n}(\pi u;x)+p_{n}(\pi_{c}u;x)+r_{n}(u;x)=\|(\pi u) (x)\|_{W}^{2}+p_{n}(\pi_{c}u;x)+r_{n}(u;x), 
\end{equation}
where we set $\pi_{c}=I-\pi$ and the quantity $r_{n}(u;x)$ is defined by 
\begin{equation}\label{decT2}
r_{n}(u;x)=2\re\left(
\ispa{(U^{n}\pi u)(x),(U^{n}\pi_{c}u)(x)}_{W}
\right).
\end{equation}
By the assumption on the absolute continuity and Lemma $\ref{CS1}$, 
we see $\displaystyle \lim_{n \to \infty}p_{n}(\pi_{c}u;x)=0$. 
Cauchy-Schwarz inequality shows $r_{n}(w;x) \leq 2\|\pi w (x)\|_{\mb{C}^{D}} p_{n}(\pi_{c}w;x)^{1/2}$ which 
tends to zero as $n \to \infty$.
\end{proof}

Corollaries $\ref{cor1}$ and $\ref{cor2}$ are direct consequences 
of the Wiener formula $\eqref{Wiener1}$, Lemmas $\ref{CS1}$, $\ref{EG1}$ and Theorem $\ref{main0}$. 

Next, we give a proof of Corollary $\ref{cor3}$. To begin with, we prepare notation. 
Let $Z$ be, as before, a countable set and let $\Gamma$ be a free abelian group of finite rank $d$. 
We suppose that $\Gamma$ acts on $Z$ freely with the finite quotient $Z_{0}=Z/\Gamma$. 
Let $F_{0} \subset Z$ be a complete set of representatives of elements in the orbit space $Z_{0}$, 
which is often called a fundamental set of the action of $\Gamma$ on $Z$. 
The natural projection $p:Z \to Z_{0}$ maps $F_{0}$ bijectively onto $Z_{0}$. 
We set $q=(p|_{F_{0}})^{-1}:Z_{0} \to F_{0}$. 
Let $U$ be a unitary operator on $\ell^{2}(Z,W_{0})$ satisfying the conditions (i), (ii) described before Corollary $\ref{cor3}$. 
Define the operator $r:\ell^{2}(Z,W_{0}) \to \ell^{2}(\Gamma,\ell^{2}(Z_{0},W_{0}))$ by 
\[
r(f)(\alpha)(x_{0})=f(\alpha q(x_{0})) \quad (f \in \ell^{2}(Z,W_{0}),\, \alpha \in \Gamma,\,x_{0} \in Z_{0}). 
\]
The operator $r$ is unitary with the natural Hermitian inner product on $\ell^{2}(\Gamma,\ell^{2}(Z_{0},W_{0}))$.  
Since the action of $\Gamma$ on $\ell^{2}(Z,W_{0})$ is intertwined by the unitary operator 
$r:\ell^{2}(Z,W_{0}) \to \ell^{2}(\Gamma,\ell^{2}(Z_{0},W_{0}))$ with the natural action of $\Gamma$ on 
$\ell^{2}(\Gamma, \ell^{2}(Z_{0},W_{0}))$, the unitary operator $rUr^{*}$ on $\ell^{2}(\Gamma,\ell^{2}(Z_{0},W_{0}))$ 
is still $\Gamma$-equivariant. 
\begin{lem}\label{finPro}
The unitary operator $rUr^{*}$ on $\ell^{2}(\Gamma,\ell^{2}(Z_{0},W_{0}))$ is a 
periodic unitary transition operator in the sense of Definition $\ref{UTOD}$. 
\end{lem}
\begin{proof}
Condition (i) for $U$ ensures that $rUr^{*}$ commutes with the action of $\Gamma$ on $\ell^{2}(\Gamma,\ell^{2}(Z_{0},W_{0}))$. 
We take a finite set $S \subset \Gamma$ as in the condition (ii) imposed on $U$. 
Let $\alpha \in \Gamma$ and $\beta \in \Gamma \setminus \alpha S$, 
and let $f_{0} \in \ell^{2}(Z_{0},W_{0})$ be arbitrary. 
We have to compute $rUr^{*}(\delta_{\alpha} \otimes f_{0})(\beta)$. 
The function $\delta_{\alpha} \otimes f_{0}$ is in $\ell^{2}(\Gamma,\ell^{2}(Z_{0},W_{0}))$, 
and the function in $\ell^{2}(Z,W_{0})$ corresponding to it is given by 
\begin{equation}\label{sstar}
r^{*}(\delta_{\alpha} \otimes f_{0})=\sum_{x_{0} \in Z_{0}}\delta_{\alpha q(x_{0})} \otimes f_{0}(x_{0}). 
\end{equation}
Thus, we get 
\[
Ur^{*}(\delta_{\alpha} \otimes f_{0})=\sum_{x_{0} \in Z_{0}} U(\delta_{\alpha q(x_{0})} \otimes f_{0}(x_{0})).
\]
The condition (ii) imposed on $U$ implies that, for $\beta \not\in \alpha S$ and $y_{0} \in Z_{0}$, 
the value at $\beta q(y_{0})$ of each of the summand in the right-hand side vanishes. 
Hence $rUr^{*}(\delta_{\alpha} \otimes f_{0})(\beta)=0$ for each $\beta \not\in \alpha S$, which 
shows that $rUr^{*}$ is a periodic unitary transition operator on $\ell^{2}(\Gamma,\ell^{2}(Z_{0},W_{0}))$.
\end{proof}
The map $r$ is naturally extended to the map $r:C(Z,W_{0}) \to C(\Gamma,\ell^{2}(Z_{0},W_{0}))$. 
Since $r$ commutes with the action of $\Gamma$, we have the following isomorphism
\begin{equation}\label{twistS}
r:C_{\chi}(Z,W_{0}) \stackrel{\cong}{\to} \chi \otimes \ell^{2}(Z_{0},W_{0}), 
\end{equation}
where, for a function $\mu$ on $\Gamma$ and $f_{0} \in \ell^{2}(Z_{0},W_{0})$, 
the function $\mu \otimes f_{0} \in C(\Gamma, \ell^{2}(Z_{0},W_{0}))$ is defined by 
\[
(\mu \otimes f_{0})(\alpha) (x_{0})=\mu(\alpha) f_{0}(x_{0}) \quad (\alpha \in \Gamma,\, x_{0} \in Z_{0}). 
\]
The map $r$ on $C_{\chi}(Z,W_{0})$ composed with the map
\[
\chi \otimes \ell^{2}(Z_{0},W_{0}) \ni \chi \otimes f_{0} \mapsto f_{0}=(\chi \otimes f_{0})(1) \in \ell^{2}(Z_{0},W_{0})
\]
gives a unitary operator
\begin{equation}\label{twistC}
r_{\chi}:C_{\chi}(Z,W_{0}) \stackrel{\cong}{\to} \ell^{2}(Z_{0},W_{0}),  
\quad r_{\chi}(f)=q^{*}f. 
\end{equation}
The Fourier transform $\fcal:L^{2}(\wh{\Gamma},\ell^{2}(Z_{0},W_{0})) \to \ell^{2}(\Gamma, \ell^{2}(Z_{0},W_{0}))$ 
and its inverse is given by 
\[
(\fcal f)(\alpha)(x_{0})=\int_{\wh{\Gamma}} \rho(\alpha) f(\rho)(x_{0}) \,d\nu(\rho),\quad 
(\fcal^{-1}g)(\chi)(x_{0})=\sum_{\gamma \in \Gamma} \chi(\gamma^{-1}) g(\gamma)(x_{0}), 
\]
where $f \in L^{2}(\wh{\Gamma},\ell^{2}(Z_{0},W_{0}))$, $g \in \ell^{2}(\Gamma, \ell^{2}(Z_{0},W_{0}))$, 
$\chi \in \wh{\Gamma}$, $\alpha \in \Gamma$ and $x_{0} \in Z_{0}$. 
\begin{lem}\label{twist}
For each $\chi \in \wh{\Gamma}$, 
we have $\wh{rUr^{*}}(\chi)=r_{\chi} U_{\chi} r_{\chi}^{*}$, where $U_{\chi}$ is the restriction of $U$ 
on $C_{\chi}(Z,W_{0})$. 
\end{lem}
\begin{proof}
We take $f_{0} \in \ell^{2}(Z_{0},W_{0})$. Then we have 
$\fcal(\pmb{1} \otimes f_{0})=\delta_{1} \otimes f_{0}$, where $\pmb{1}$ is the 
function taking the value identically $1$ on $\wh{\Gamma}$ and $\delta_{1}$ is the delta function 
at the identity element $1 \in \Gamma$. 
We see, for $\chi \in \wh{\Gamma}$,  
\[
\begin{split}
\wh{rUr^{*}}(\chi)f_{0} =\ucal (1 \otimes f_{0}) (\chi)= \sum_{\gamma \in \Gamma}
\chi(\gamma) rUr^{*}(1 \otimes f_{0}) (\gamma^{-1})=
\sum_{\gamma \in \Gamma}
\chi(\gamma) rUr^{*}(\delta_{\gamma} \otimes f_{0}) (1). 
\end{split}
\]
Now, the sum $\sum_{\gamma \in \Gamma} \chi(\gamma) \delta_{\gamma} \otimes f_{0}$ 
converges in the topology of point-wise convergence on $C(\Gamma, \ell^{2}(Z_{0},W_{0}))$ 
to the function $\chi \otimes f_{0}$. Since, by the condition (ii), $rUr^{*}$ is continuous in this topology, 
we have 
\[
\sum_{\gamma \in \Gamma}
\chi(\gamma) rUr^{*}(\delta_{\gamma} \otimes f_{0})=rUr^{*}(\chi \otimes f_{0})=r_{\chi}U_{\chi}r_{\chi}^{*} f_{0}, 
\]
which completes the proof. 
\end{proof}
Now, Corollary $\ref{cor3}$ follows from Lemmas $\ref{finPro}$, $\ref{twist}$ and Theorem $\ref{main0}$. 

We remark that, in the above proof, the space $Z$ is regarded as a product $\Gamma \times Z_{0}$. 
This method is, for the setting-up in graph theory, not quite natural because it does not take the graph structure 
into account. For the setting in graph theory, 
it would be more transparent to use a unitary operator constructed as follows. We fix a function $s_{\chi}:Z \to U(1)$ 
satisfying $s_{\chi}(\alpha x)=\chi(\alpha) s_{\chi}(x)$ for all $\alpha \in \Gamma$, $x \in Z$. 
Then the operator $S_{\chi}:\ell^{2}(Z_{0},W_{0}) \to C_{\chi}(Z,W_{0})$ given by 
\[
(S_{\chi}f_{0})(x)=s_{\chi}(x)f(p(x)) \quad (f_{0} \in \ell^{2}(Z_{0},W_{0}),\, x \in Z)
\]
is a unitary operator. In our setting above, we formulated problems on $\ell^{2}(\Gamma, \ell^{2}(Z_{0},W_{0}))$ 
rather than on $\ell^{2}(Z,W_{0})$. Thus it would be reasonable and useful to use the 
operator $r_{\chi}:C_{\chi}(Z,W_{0}) \to \ell^{2}(Z_{0},W_{0})$. The operator $S_{\chi}^{-1}U_{\chi}S_{\chi}$ is 
unitarily equivalent to $r_{\chi}U_{\chi}r_{\chi}^{*}$ by the operator $r_{\chi}S_{\chi}$ which 
is just the multiplication by the function $q^{*}s_{\chi}$. 
Therefore, the spectral structure of $U_{\chi}$ or $\wh{rUr^{*}}(\chi)$ 
is the same as that of $S_{\chi}^{-1}U_{\chi}S_{\chi}$. 

Let us describe an example where Corollary $\ref{cor3}$ can be applied, 
Grover walks on topological crystals. 
In the following, 
the terminology, definitions and the descriptions basically follow that of \cite{KS} and \cite{S}. 
For a graph $Y$, we denote the set of vertices and the set of oriented edges by $V(Y)$, $E(Y)$, respectively. 
For any $e \in E(Y)$, the origin $o(e)$ and the terminus $t(e)$ are the vertices which are connected by the 
oriented edge $e$ in an obvious direction. For any vertex $y \in V(Y)$, we set $E(Y)_{y}=\{e \in E(Y) \,;\, o(e)=x\}$. 
Then the degree of $y \in V(Y)$, denoted by $\deg (y)$, is given by the number of elements in $E(Y)_{y}$. 
An orientation of the graph $Y=(V(Y),E(Y))$ is a subset $E^{o}(Y)$ of $E(Y)$ 
such that $E(Y)$ is written as the disjoint union $E(Y)=E^{o}(Y) \cup \ol{E^{o}(Y)}$.  

Let $X$ be a topological crystal (\cite{S}) over a connected finite graph $X_{0}$ 
with an abstract periodic lattice $\Gamma$, namely, $X$ is an infinite locally finite 
connected graph on which a free abelian group $\Gamma$ of rank $d$ ($0<d<+\infty$) acts 
freely and without inversion such that the quotient graph is $X_{0}$. 
The natural projection from $X$ onto $X_{0}$ is denoted by $p:X \to X_{0}$. 
A linear map $U$ on the space $C(E(X))$ consisting of all complex-valued function on $E(X)$ defined by 
\begin{equation}\label{grover1}
(U\phi)(e)=-\phi(\ol{e}) +\frac{2}{\deg (t(e))} \sum_{e' \in E(X)_{t(e)}} \phi(e') \qquad (\phi \in C(E(X)),\, e \in E(X)), 
\end{equation}
is often called a {\it Grover walk} (\cite{W}). 
See also \cite{HKSS} where a generalization, called a {\it twisted Szegedy walk} is formulated. 
The restriction of $U$ on $\ell^{2}(E(X))$, which is also denoted by $U$, defines a unitary operator on $\ell^{2}(E(X))$, 
and $U$ commutes with the action of $\Gamma$. Since the value $(U\phi)(e)$ depends only on $\phi(e')$ and $\phi(\ol{e})$ 
with the edges $e'$ adjacent to $e$, the operator $U$ satisfies 
the conditions (i), (ii) described before the statement of Corollary $\ref{cor3}$. 

Let us fix $\chi \in \wh{\Gamma}$ and compute the operator $\wh{U}(\chi) \cong U_{\chi}$ on $\ell^{2}(E(X_{0}))$. 
To compute it, we need to choose a fundamental set $F_{0}$ in the set of oriented edges $E(X)$ of the action of $\Gamma$. 
We fix a spanning tree $T_{0}$ in $X_{0}$ and its lift $T$ in $X$. 
For any $u \in E(X_{0})$, there exists a unique edge $q(u) \in E(X)$ with the properties $p(q(u))=u$ and $o(q(u)) \in V(T)$. 
Then the finite set $F_{0}=q(E(X_{0}))$ in $E(X)$ is a fundamental set of 
the action of $\Gamma$ on $E(X)$. Note that in general $q(\ol{u}) \neq \ol{q(u)}$. But if $u \in E(T_{0})$ then we have $q(\ol{u})=\ol{q(u)}$. 
We follow the construction of a unitary operator $S_{\chi}:\ell^{2}(E(X_{0})) \to C_{\chi}(X)$ given in \cite{KS}. 
We fix $\omega \in \Hom(\Gamma,\mb{R})$ such that $\chi=\exp (2\pi \iu \omega)$. 
Let $\mu:H_{1}(X_{0},\mb{Z}) \to \Gamma$ be the surjective homomorphism characterized by the 
identity 
\[
t(\tilde{c})=\mu(\ispa{c}) o(\tilde{c}),
\]
where $c$ is a closed path in $X_{0}$, $\tilde{c}$ is a lift of $c$, 
$\ispa{c}$ is the homology class of $c$ (with coefficients in $\mb{Z}$) and $t(\tilde{c})$, $o(\tilde{c})$ denote 
the terminus and origin of the path $\tilde{c}$ in $X$. 
We extend $\mu$ as a linear map $\mu_{\mb{R}}:H_{1}(X_{0},\mb{R}) \to \Gamma \otimes \mb{R}$. 
Since the dual space of $H_{1}(X_{0},\mb{R})$ is naturally identified with the cohomology group $H^{1}(X_{0},\mb{R})$, 
the transpose 
$\mu_{\mb{R}}^{*}:\Hom(\Gamma,\mb{R}) \cong (\Gamma \otimes \mb{R})^{*} \to H^{1}(X_{0},\mb{R})$ 
is an injective linear map. 
We take a one-form $\eta$, 
which is a function on $E(X_{0})$ such that $\eta(\ol{e})=-\eta(e)$, 
whose cohomology class is $\mu_{\mb{R}}^{*}\omega \in H^{1}(X_{0},\mb{R})$. 
We have $\omega (\mu(\ispa{c}))=\eta(c)$ for any closed path $c$, where, for $c=(e_{1},\ldots,e_{n})$, 
we write $\eta(c)=\sum_{i=1}^{n}\eta(e_{i})$. We remark that $\eta(c)$ does not depend on the choice of $\eta$ 
representing the cohomology class $\mu_{\mb{R}}^{*}\omega$. 
We fix $e_{*} \in E(T)$ and define a function $s_{\eta}:E(X) \to \mb{C}$ by 
\[
s_{\eta}(e)=e^{
2\pi \iu (p^{*}\eta)(\gamma(e_{*},e))
}, 
\]
where $\gamma(e_{*},e)$ denotes any path from $o(e_{*})$ to $t(e)$ in $X$ such that 
the first edge of $\gamma(e_{*},e)$ is $e_{*}$ and the last edge is $e$. 
Since $\mu(\ispa{p(\tilde{c})})=1$ for any closed path $\tilde{c}$ on $X$, 
the right-hand side in the definition of $s_{\eta}$ does not depend on 
the choice of such a path from $o(e_{*})$ to $t(e)$, 
and we have $s_{\eta} \in C_{\chi}(E(X))$. 
We note that $s_{\eta}$ depends on the choice of $1$-form $\eta$ 
representing $\mu_{\mb{R}}^{*}\omega$ and even the choice of $\omega$. 
Now the linear map 
$S_{\chi}:\ell^{2}(E(X_{0})) \to C_{\chi}(E(X))$ defined 
by 
\[
(S_{\chi} f_{0})(e)=s_{\eta}(e) f_{0}(p(e)) \quad (f_{0} \in \ell^{2}(E(X_{0})),\, e \in E(X))
\]
is a unitary isomorphism. 
It is straightforward to see
\[
(S_{\chi}^{-1}U_{\chi}S_{\chi} f_{0})(e_{0})=-(e^{2\pi \iu \eta}f_{0})(\ol{e_{0}}) 
+\frac{2}{\deg t(e_{0})} \sum_{u \in E(X_{0})_{t(e_{0})}} 
(e^{2\pi \iu \eta} f_{0})(u) \quad (e_{0} \in E(X_{0})),  
\]
which is (unitarily equivalent to) the twisted Szegedy walk whose spectral structure is discussed in \cite{HKSS}.

\section{Simple properties of periodic unitary transition operators}\label{UTO}

For $i=1,\ldots,d$, let $\tau_{i}$ be the shift operator on $H=\ell^{2}(\mb{Z}^{d},\mb{C}^{D})$ defined by 
\[
(\tau_{i}g)(x)=g(x-e_{i}) \quad (g \in H,\ x \in \mb{Z}^{d}), 
\]
where $\{e_{1},\ldots,e_{d}\}$ denotes the standard basis of $\mb{Z}^{d}$ over $\mb{Z}$. 
Then, $\tau_{i}$'s are unitary operators on $H$. 
We note that $\tau_{i}$ and $\tau_{j}$ commutes for any $i,j$, and 
hence, for $\dsp \alpha=\sum_{i=1}^{d}\alpha_{i}e_{i}$, we may write 
$\tau^{\alpha}=\tau_{1}^{\alpha_{1}} \cdots \tau_{d}^{\alpha_{d}}$. 
The action of $\mb{Z}^{d}$ on $H$ is given by the operators $\tau^{\alpha}$.  
\begin{lem}\label{PUT1}
Let $U$ be a periodic unitary transition operator on $H$ with the set of steps $S \subset \mb{Z}^{d}$. 
For each $\alpha \in S$, define a linear map $C(\alpha)$ on $\mb{C}^{D}$ by 
\[
C(\alpha)\phi=U(\delta_{0} \otimes \phi)(\alpha) \quad (\phi \in \mb{C}^{D}). 
\]
Then, $U$ can be written in the following form. 
\[
U=\sum_{\alpha \in S}\tau^{\alpha}C(\alpha). 
\]
\end{lem}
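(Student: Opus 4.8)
The plan is to verify the claimed identity first on the spanning vectors $\delta_{y}\otimes \phi$ and then extend it to all of $H$ by boundedness and density. The crucial observation is that $\delta_{y}\otimes \phi = \tau^{y}(\delta_{0}\otimes \phi)$, so that the two defining properties of a periodic unitary transition operator — the finite step support and the commutation with the $\mb{Z}^{d}$-action — can both be brought to bear on the single vector $\delta_{0}\otimes \phi$, whose image under $U$ is encoded in the matrices $C(\alpha)$.

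First I would compute the left-hand side applied to $\delta_{y}\otimes \phi$. Using $\delta_{y}\otimes \phi = \tau^{y}(\delta_{0}\otimes \phi)$ together with $U\tau^{y}=\tau^{y}U$, I obtain $U(\delta_{y}\otimes \phi)(x)=U(\delta_{0}\otimes \phi)(x-y)$. By the support condition in Definition \ref{UTOD} applied at the origin, $U(\delta_{0}\otimes \phi)(z)=0$ whenever $z \notin S$, while by the very definition of $C(\alpha)$ one has $U(\delta_{0}\otimes \phi)(\alpha)=C(\alpha)\phi$ for $\alpha \in S$. Hence $U(\delta_{y}\otimes \phi)(x)$ equals $C(x-y)\phi$ if $x-y \in S$ and $0$ otherwise.

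Next I would evaluate the right-hand side on the same vector. Since $C(\alpha)$ acts pointwise and $\tau^{\alpha}$ shifts, $\bigl(\sum_{\alpha \in S}\tau^{\alpha}C(\alpha)(\delta_{y}\otimes \phi)\bigr)(x)=\sum_{\alpha \in S}C(\alpha)\bigl((\delta_{y}\otimes \phi)(x-\alpha)\bigr)$, and the summand survives only when $x-\alpha=y$, i.e. $\alpha=x-y$. This reproduces exactly the expression found for the left-hand side, so the two operators agree on every $\delta_{y}\otimes \phi$.

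Finally, I would conclude by a density argument. The finite linear combinations of the vectors $\delta_{y}\otimes \phi$ are precisely the finitely supported functions, which form a dense subspace of $H$. The operator $U$ is bounded, being unitary, and $\sum_{\alpha \in S}\tau^{\alpha}C(\alpha)$ is a finite sum of bounded operators — each $\tau^{\alpha}$ is unitary and each $C(\alpha)$, being a linear map on the finite-dimensional space $\mb{C}^{D}$ acting pointwise, is bounded on $H$. Two bounded operators agreeing on a dense subspace coincide, which yields the identity. I do not expect a genuine obstacle here; the only points requiring care are the index bookkeeping in the shift computation and the observation that restricting the sum to $\alpha \in S$ automatically encodes the vanishing of $U(\delta_{0}\otimes \phi)$ outside $S$.
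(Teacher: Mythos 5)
Your proof is correct and follows essentially the same route as the paper: the paper also verifies the identity on the spanning vectors $\delta_{x}\otimes\phi$ (dismissing the computation as ``straightforward'') and then invokes the fact that these vectors span $H$. You have simply filled in the details the paper omits — the reduction to $\delta_{0}\otimes\phi$ via translation invariance, the shift bookkeeping, and the explicit density-plus-boundedness extension implicit in the paper's spanning argument.
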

\begin{proof}
It is straightforward to see that, for any $\phi \in \mb{C}^{D}$ and $x \in \mb{Z}^{d}$, 
\[
U(\delta_{x} \otimes \phi)=\sum_{\alpha \in S} \tau^{\alpha} \delta_{x} \otimes C(\alpha) \phi
=\left(
\sum_{\alpha \in S}\tau^{\alpha} C(\alpha)
\right) (\delta_{x} \otimes \phi). 
\]
Since the set $\{\delta_{x} \otimes \phi\}_{x \in \mb{Z}^{d},\phi \in \mb{C}^{D}}$ spans $H$, we conclude the assertion. 
\end{proof}
The operator $\fcal^{*}\tau_{i}\fcal$, where $\fcal:\hcal \to H$ is the Fourier transform defined in $\eqref{FT1}$ and $\fcal^{*}$ is its inverse, 
is the multiplication operator defined by the coordinate function $z_{i}$, where $z=(z_{1},\ldots,z_{d}) \in T^{d}$. 
Hence, the operator $\ucal=\fcal^{*}U\fcal$ is written in the form $\eqref{UT1}$ with the matrix $\wh{U}(z)$ given by the formula
\begin{equation}\label{MVF}
\wh{U}(z)\phi=\sum_{\alpha \in S}z^{\alpha}C(\alpha)\phi=\ucal(1 \otimes \phi)(z) \quad (\phi \in \mb{C}^{D}).
\end{equation}
Therefore, by Lemma $\ref{PUT1}$ and the assumption that $U$ is a unitary operator, we see 
\[
\sum_{\alpha,\beta \in S,\,\alpha-\beta=\gamma}C(\beta)^{*}C(\alpha)=\delta_{\gamma,0}I
\]
for any $\gamma \in \mb{Z}^{d}$. From this and $\eqref{MVF}$, it follows that the matrix $\wh{U}(z)$ is 
unitary for any $z \in T^{d}$. We note that $\wh{U}(z)$ is a Laurent polynomial in $z \in T^{d}$ and 
hence it is naturally extended to the complex torus $T_{\mb{C}}^{d}=(\mb{C} \setminus \{0\})^{d}$. 
However, in general, $\wh{U}(z)$ is not normal when $z$ is not in the real torus $T^{d}$. 
We remark that, for a bounded operator $A$ on $H=\ell^{2}(\mb{Z}^{d},\mb{C}^{D})$ commuting with 
the action of $\mb{Z}^{d}$, the bounded operator $\fcal^{*}A \fcal$ on $\hcal=L^{2}(T^{d},\mb{C}^{D})$ commutes with 
multiplication operators by continuous functions on $T^{d}$. 
The characteristic polynomial 
\begin{equation}\label{char1}
\chi(\zeta,z)=\det(\zeta-\wh{U}(z)) \quad (z \in T_{\mb{C}}^{d},\,\zeta \in \mb{C})
\end{equation}
of $\wh{U}(z)$ is a polynomial in $\zeta$ of degree $D$ with coefficients in the ring of Laurent polynomials in $z \in T_{\mb{C}}^{d}$. 
\begin{lem}\label{sp1}
We have the following. 
\[
\spec(U)=\{\lambda \in S^{1}\,;\,
\mbox{There exists a point $z \in T^{d}$ such that $\lambda$ is an eigenvalue of $\wh{U}(z)$}
\}.
\]
\end{lem}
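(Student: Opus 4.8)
The plan is to reduce everything to the unitarily equivalent multiplication operator $\ucal=\fcal^{*}U\fcal$ on $\hcal=L^{2}(T^{d},\mb{C}^{D})$ and then invoke the standard description of the spectrum of a multiplication operator by a continuous matrix-valued function. Since $\fcal$ is unitary we have $\spec(U)=\spec(\ucal)$, so it suffices to identify $\spec(\ucal)$ with the set $\Sigma$ appearing on the right-hand side of the statement. By $\eqref{UT1}$ and $\eqref{MVF}$ the operator $\ucal$ acts by $(\ucal f)(z)=\wh{U}(z)f(z)$, where $\wh{U}(z)$ is a Laurent polynomial in $z$, hence continuous on the compact torus $T^{d}$, and is unitary (in particular normal, so that its spectrum coincides with its set of eigenvalues) for every $z\in T^{d}$. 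As $\ucal$ is unitary, $\spec(\ucal)\subset S^{1}$, so it is enough to prove the two inclusions over $S^{1}$.

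For the inclusion $\Sigma\subseteq\spec(\ucal)$ I would construct a Weyl sequence. Fix $\lambda\in\Sigma$ together with a point $z_{0}\in T^{d}$ and a unit vector $\phi_{0}\in\mb{C}^{D}$ such that $\wh{U}(z_{0})\phi_{0}=\lambda\phi_{0}$. For a shrinking family of neighborhoods $B_{\ve}$ of $z_{0}$ (each of positive measure) put $f_{\ve}=\nu_{d}(B_{\ve})^{-1/2}\,\mathbf{1}_{B_{\ve}}\otimes\phi_{0}$, so that $\|f_{\ve}\|=1$ and
\[
\|(\ucal-\lambda)f_{\ve}\|^{2}=\frac{1}{\nu_{d}(B_{\ve})}\int_{B_{\ve}}\|(\wh{U}(z)-\lambda)\phi_{0}\|_{\mb{C}^{D}}^{2}\,d\nu_{d}(z).
\]
Because $z\mapsto\wh{U}(z)\phi_{0}$ is continuous and equals $\lambda\phi_{0}$ at $z_{0}$, the integrand tends to $0$ uniformly on $B_{\ve}$ as $\ve\to0$, whence $\|(\ucal-\lambda)f_{\ve}\|\to0$. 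Thus $\lambda$ lies in the approximate point spectrum of $\ucal$, and therefore in $\spec(\ucal)$.

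For the reverse inclusion I argue by contraposition: assume $\lambda\in S^{1}\setminus\Sigma$, so that $\wh{U}(z)-\lambda$ is invertible for every $z\in T^{d}$. By Cramer's rule the entries of $(\wh{U}(z)-\lambda)^{-1}$ are polynomials in the (continuous) entries of $\wh{U}(z)$ divided by $\det(\wh{U}(z)-\lambda)$, which is continuous and nowhere vanishing on $T^{d}$; hence $z\mapsto(\wh{U}(z)-\lambda)^{-1}$ is continuous on the compact set $T^{d}$ and $M\eqd\sup_{z\in T^{d}}\|(\wh{U}(z)-\lambda)^{-1}\|<\infty$. Multiplication by $(\wh{U}(z)-\lambda)^{-1}$ is then a bounded operator on $\hcal$ of norm at most $M$ and is a two-sided inverse of $\ucal-\lambda$, so $\lambda\notin\spec(\ucal)$. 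Combining the two inclusions yields $\spec(U)=\spec(\ucal)=\Sigma$.

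The argument is essentially routine once the reduction to $\ucal$ is in place; the only points needing care are the continuity (hence uniform boundedness, via compactness of $T^{d}$) of the matrix inverse in the resolvent construction, and the verification that the averaged defect of the Weyl sequence actually vanishes, which rests on continuity of the Laurent polynomial $\wh{U}$ and on $\phi_{0}$ being a genuine eigenvector at $z_{0}$. I do not expect a serious obstacle; the main thing to keep straight is that normality of $\wh{U}(z)$ is used only for $z\in T^{d}$, where it ensures that the spectrum of the matrix reduces to its eigenvalues.
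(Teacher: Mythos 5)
Your proof is correct, but it takes a genuinely different route from the paper's. The paper dismisses the inclusion $\spec(U)\subseteq\{\lambda\in S^{1}:\exists\,z\in T^{d},\ \lambda\ \mbox{an eigenvalue of}\ \wh{U}(z)\}$ as easy and never writes it out --- this is precisely the inclusion you prove explicitly, via Cramer's rule and compactness of $T^{d}$ to get a uniformly bounded continuous inverse $(\wh{U}(z)-\lambda)^{-1}$ whose multiplication operator inverts $\ucal-\lambda$. All of the paper's effort goes into the reverse inclusion, argued contrapositively: assuming $\lambda$ is in the resolvent set, it first shows on a full-measure set $W\subset T^{d}$ that $\rcal=(\lambda-\ucal)^{-1}$ acts as multiplication by $\wh{R}(z)=(\lambda-\wh{U}(z))^{-1}$, then derives a contradiction from the existence of a point $z_{o}$ where $\lambda$ is an eigenvalue of $\wh{U}(z_{o})$, by producing localized vectors $v_{n}=f_{n}\otimes\phi_{n}$ supported near points $z_{n}\to z_{o}$ with $\|\rcal v_{n}\|\geq n$, contradicting boundedness of $\rcal$. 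Your direct Weyl sequence $f_{\ve}=\nu_{d}(B_{\ve})^{-1/2}\mathbf{1}_{B_{\ve}}\otimes\phi_{0}$ proves the same inclusion without the contrapositive detour and without the measure-theoretic preliminaries (the full-measure set $W$, the density of $V=\{z:\chi(\lambda,z)\neq 0\}$), using only continuity of the Laurent polynomial $\wh{U}$ at $z_{0}$ and identifying $\lambda$ in the approximate point spectrum; the two arguments are morally dual (the paper's localized test functions blow the resolvent up, yours make the defect of $\ucal-\lambda$ small), but yours is the more standard and streamlined version and is self-contained on both inclusions. One inessential remark: normality of $\wh{U}(z)$ is not actually needed to conclude that the spectrum of the matrix equals its set of eigenvalues --- in finite dimensions that holds for every matrix --- though unitarity on $T^{d}$ is of course what places those eigenvalues on $S^{1}$.
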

\begin{proof}
It is easy to show that the spectrum $\spec(U)$ is contained in the set in the right-hand side above. 
Thus, we prove that $\spec(U)$ contains the set in the right-hand side. 
Suppose that $\lambda$ is in the resolvent set of $U$ and set $V=\{z \in T^{d}\,;\,\chi(\lambda,z) \neq 0\}$. 
Then, $\rcal=(\lambda -\ucal)^{-1}$ is a bounded operator on $\hcal$. 
Let $W \subset T^{d}$ be a Borel set such that $\nu_{d}(W)=1$ and $\wh{R}(z)\phi:=(\lambda -\ucal)^{-1}(1 \otimes \phi)(z)$ is defined 
for any $z \in W$ and any $\phi \in \mb{C}^{D}$. Then we have $\phi=(\lambda -\wh{U}(z))\wh{R}(z)\phi$ 
for any $z \in W$ and $\phi \in \mb{C}^{D}$. Thus, $W \subset V$. This shows that $\nu_{d}(V)=1$ and 
that $V$ is an open dense set such that $T^{d} \setminus V$ does not contain any non-empty open set. 
Therefore we write $\wh{R}(z)=(\lambda -\wh{U}(z))^{-1}$ for $z \in V$. 
We claim that $V=T^{d}$. 
To show this, suppose contrary that $T^{d} \setminus V \neq \emptyset$ and we take $z_{o} \in T^{d} \setminus V$. 
We take $\phi_{o} \in \mb{C}^{D}$ such that $\|\phi_{o}\|_{\mb{C}^{D}}=1$ and $(\lambda -\wh{U}(z_{o}))\phi_{o}=0$. 
For any positive integer $n$, there exists a neighborhood $U_{n}$ of $z_{o}$ in $T^{d}$  such that 
$\|(\lambda -\wh{U}(z))\phi_{o}\|_{\mb{C}^{D}}<1/(n+1)$ for any $z \in U_{n}$. 
Then, for any $z \in U_{n} \cap V$, we have
\[
1=\|\wh{R}(z)(\lambda -\wh{U}(z))\phi_{o}\|_{\mb{C}^{D}} \leq \|\wh{R}(z)\|_{{\rm op}} \|(\lambda -\wh{U}(z))\phi_{o}\|_{\mb{C}^{D}}
\leq \|\wh{R}(z)\|_{{\rm op}} /(n+1), 
\]
where $\|\cdot \|_{{\rm op}}$ denotes the operator norm. 
For each $n$, we fix $z_{n} \in U_{n} \cap V$. 
One can find $\phi_{n} \in \mb{C}^{D}$ with $\|\phi_{n}\|_{\mb{C}^{D}}=1$ 
such that $n<\|\wh{R}(z_{n})\phi_{n}\|_{\mb{C}^{D}}$. 
There exists a neighborhood $V_{n}$ of $z_{n}$ such that $V_{n} \subset U_{n} \cap V$ and 
$n<\|\wh{R}(z)\phi_{n}\|_{\mb{C}^{D}}$ for any $z \in V_{n}$. We take a continuous function $f_{n}$ 
on $T^{d}$ such that the support of $f_{n}$ is contained in $V_{n}$ and 
$\|f_{n}\|_{L^{2}(T^{d})}=1$. We set $v_{n}=f_{n} \otimes \phi_{n} =f_{n}(1\otimes \phi_{n})\in \hcal$ which satisfies $\|v_{n}\|=1$. 
We see $(\rcal v_{n})(z)=f_{n}(z) \wh{R}(z)\phi_{n}$ for any $z \in W$. Then, 
\[
\|\rcal\|_{{\rm op}}^{2} \geq \|\rcal v_{n}\|^{2}=\int_{V_{n}} |f_{n}(z)|^{2} \|\wh{R}(z)\phi_{n}\|_{\mb{C}^{D}}^{2} d\nu_{d}(z) \geq n^{2}, 
\]
which contradicts the fact that $\rcal$ is a bounded operator. This shows that $V=T^{d}$, and hence $\lambda$ is not an 
eigenvalue of $\wh{U}(z)$ for all $z \in T^{d}$. 
\end{proof}

\section{Eigenvalues of periodic unitary transition operators}\label{EIG}

In this section and the next section, we frequently use the following lemma. 
This lemma is essentially the same as Lemma 4.4 in \cite{HN}. However, we give 
its proof for convenience. 

\begin{lem}\label{analN}
Let $\Delta \subset \mb{C}^{d}$ be a domain. 
Let $f$ be a holomorphic function on $\Delta$. 
Suppose that $f^{-1}(0) \cap T^{d}$ has positive Lebesgue measure on $T^{d}$. 
Then $f$ is identically zero on $\Delta$. 
\end{lem}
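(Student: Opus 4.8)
The plan is to reduce everything to the one-variable identity theorem, which provides the only genuine analytic input: a function holomorphic on a connected open subset of $\mb{C}$ whose zero set has an accumulation point in that subset vanishes identically. Since a subset of $S^{1}$ of positive length is uncountable and therefore has an accumulation point on $S^{1}$, this already settles the case $d=1$. For $d\geq 2$ the strategy is to first show that $f$ vanishes on all of $T^{d}$ near a suitable point, and then to ``thicken'' this vanishing to a full open subset of $\Delta$, at which stage the multivariable identity theorem (a holomorphic function on a domain that vanishes on a nonempty open set vanishes identically) finishes the argument. To begin, I would replace $T^{d}$ by a small neighbourhood of a Lebesgue density point $p\in f^{-1}(0)\cap T^{d}$; such a point lies in $\Delta$, so I may fix a polydisc $P=\prod_{j}D(p_{j},r_{j})\subset\Delta$ and work with the positive-measure set $Z=f^{-1}(0)\cap T^{d}\cap P$ inside $T^{d}\cap P$.

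The first main step is an induction on $d$ showing that $f$ vanishes on the whole local torus $T^{d}\cap P'$ for a slightly smaller polydisc $P'$. Writing $z=(z',w)$ with $z'\in T^{d-1}$ and $w\in S^{1}$, Fubini's theorem yields a positive-measure set $A$ of $z'$ for which the slice $\{w:(z',w)\in Z\}$ has positive length; for each such $z'$ the one-variable case forces $w\mapsto f(z',w)$ to vanish along the whole local circle. Fixing $w_{0}$ and applying the inductive hypothesis to $z'\mapsto f(z',w_{0})$, which vanishes on the positive-measure set $A\subset T^{d-1}$, gives vanishing on the entire local torus in the $z'$ variables; letting $w_{0}$ range over the circle shows $f\equiv 0$ on $T^{d}\cap P'$.

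The second step thickens this to an open set by peeling off one variable at a time. On each factor $\gamma_{j}=\{|z_{j}|=1\}\cap D(p_{j},r_{j})$ is an arc through $p_{j}$ with an accumulation point. Freezing $z_{2},\dots,z_{d}$ on $\gamma_{2}\times\cdots\times\gamma_{d}$, the holomorphic function $z_{1}\mapsto f(z_{1},z_{2},\dots,z_{d})$ vanishes on $\gamma_{1}$ and hence on all of $D(p_{1},r_{1})$; repeating with $z_{1}$ now free and $z_{2}$ the active variable, and so on through all $d$ coordinates, gives $f\equiv 0$ on the full polydisc. Since $\Delta$ is connected, the multivariable identity theorem then yields $f\equiv 0$ on $\Delta$.

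The step I expect to require the most care is the bookkeeping of connectedness in the induction: the one-dimensional slices $\{w:(z',w)\in\Delta\}$ and $\{z':(z',w_{0})\in\Delta\}$ need not be connected, so each application of the identity theorem must be confined to the connected component containing the relevant local circle or torus. Working inside the fixed polydisc $P$ (where all slices are discs, hence connected) neatly removes this difficulty, and the remaining ingredients, namely Fubini's theorem and the two identity theorems, are then routine.
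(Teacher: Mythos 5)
Your argument is correct, but it is organized genuinely differently from the paper's. Both proofs share the same skeleton --- induction on $d$, Fubini slicing of the torus, and the one-variable identity theorem --- but the paper proves the contrapositive inside a polydisc: assuming $f \not\equiv 0$, it shows that the exceptional set $M=\{w \in S^{1}\,;\,\nu_{d-1}(g_{w}^{-1}(0)\cap T^{d-1})>0\}$ is discrete, by producing a point $z_{o}\in T^{d-1}$ with $h_{z_{o}}\not\equiv 0$ (itself requiring an inner application of the inductive hypothesis) and using the discreteness of the zeros of $h_{z_{o}}$ to isolate each point of $M$; Fubini then gives $\nu_{d}(f^{-1}(0)\cap T^{d})=0$, and a general domain is handled at the outset by a countable cover by polydiscs. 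You instead argue in the direct direction: localization at a Lebesgue density point replaces the countable cover, Fubini produces a positive-measure set $A$ of good slices on which the one-variable case forces vanishing, and the inductive hypothesis applied at each fixed $w_{0}$ yields vanishing on a set that you then thicken to an open polydisc before invoking the several-variable identity theorem. What your route buys is the elimination of the paper's most delicate step, the discreteness-of-$M$ argument with its nested contradictions; what it costs is the extra thickening stage --- which, incidentally, you could skip entirely: for each $z' \in A$ the function $f(z',\cdot)$ vanishes on the \emph{whole} disc $D(p_{d},r_{d})$, not merely on the arc, so letting $w_{0}$ range over that disc (rather than over the local circle) already gives $f\equiv 0$ on the full polydisc $P$ directly from the inductive hypothesis, with no coordinate-by-coordinate propagation needed.

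One small point to make explicit in your base case: an accumulation point of $f^{-1}(0)\cap S^{1}$ merely ``on $S^{1}$'' is not enough, since it could lie on $\partial \Delta$, where the identity theorem says nothing. You need an accumulation point inside $\Delta$, and your density-point device supplies exactly this --- a density point belongs to the set itself, hence to $\Delta$ --- so the $d=1$ case and each one-variable slice application should be phrased through a density point of the positive-measure set rather than through uncountability alone.
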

\begin{proof}
First, let us assume that $\Delta$ is a polydisk in $\mb{C}^{d}$, that is, $\Delta$ is a product of $d$ disks in $\mb{C}$. 
We use an induction on the dimension $d$. When $d=1$, the assertion is trivial since $f^{-1}(0) \cap S^{1}$ is finite if $f$ is not identically zero. 
Suppose that, for $d \geq 2$, the assertion is true in the dimension $d-1$. 
Let $\Delta \subset \mb{C}^{d}$ be a polydisk and let $f$ be a holomorphic function on $\Delta$. 
Suppose that $f$ is not identically zero. We would like to show that $\nu_{d}(f^{-1}(0) \cap T^{d})=0$. 
We may assume that $\Delta \cap T^{d} \neq \emptyset$. 
In the following, a point in $\mb{C}^{d}$ is written as $(z,w)$ with $z \in \mb{C}^{d-1}$ and $w \in \mb{C}$. 
We set $\Delta=V_{d-1} \times V_{1}$ where $V_{d-1} \subset \mb{C}^{d-1}$ is a polydisk and $V_{1} \subset \mb{C}$ is a disk. 
For any $(z,w) \in \Delta$, we set $g_{w}(z)=h_{z}(w)=f(z,w)$. 
The functions $g_{w}$ and $h_{z}$ are holomorphic on $V_{d-1}$ and $V_{1}$, respectively. 
Fubini's theorem shows 
\[
\nu_{d}(f^{-1}(0) \cap T^{d})=\int_{S^{1}} \nu_{d-1}(g_{w}^{-1}(0) \cap T^{d-1})\,d\nu_{1}(w). 
\]
We set $M=\{w \in S^{1}\,;\,\nu_{d-1}(g_{w}^{-1}(0) \cap T^{d-1})>0\}$. 
We show that $\nu_{1}(M)=0$. 
By induction hypothesis, $w \in M$ if and only if $g_{w}$ is identically zero on $V_{d-1}$. 
Suppose that $M \neq \emptyset$ and we take $w_{o} \in M$. 
Since $g_{w_{o}}=0$ on $V_{d-1}$, $h_{z}(w_{o})=0$ for any $z \in V_{d-1}$. 
Since $f$ is not identically zero, one can take a point $z_{o} \in V_{d-1}$ such 
that $h_{z_{o}}$ is not identically zero on $V_{1}$. 
For such a point, one can take $z_{o}$ in $V_{d-1} \cap T^{d-1}$ by the induction hypothesis and 
the assumption that $f$ is not identically zero. 
Since $h_{z_{o}}$ is holomorphic on the disk $V_{1}$ in $\mb{C}$, $h_{z_{o}}^{-1}(0)$ is a discrete set containing $w_{o}$. 
There exists an open neighborhood $I$ of $w_{o}$ in $V_{1}$ such that $h_{z_{o}}^{-1}(0) \cap I=\{w_{o}\}$. 
We clearly have $M \cap I=\{w_{o}\}$. 
This means that $M$ is a discrete set in $S^{1} \cap V_{1}$ and hence $M$ is at most countable 
and its Lebesgue measure is zero. 
Now, let $\Delta \subset \mb{C}^{d}$ be a general domain and let $f$ be a holomorphic function on $\Delta$.  
Since $\Delta$ is second countable, one can find a countable open overing $\{\Delta_{n}\}_{n=1}^{\infty}$ of $\Delta$ 
such that each $\Delta_{n}$ is a polydisk. Suppose that $\nu_{d}(f^{-1}(0) \cap T^{d})>0$. 
Then, we have 
\[
0<\nu_{d}(f^{-1}(0) \cap T^{d}) \leq \sum_{n} \nu_{d}(\Delta_{n} \cap f^{-1}(0) \cap T^{d}), 
\]
and hence there exists an integer $n$ such that $\nu_{d}(\Delta_{n} \cap f^{-1}(0) \cap T^{d})>0$. 
Thus, from what we have just proved above, $f$ is identically zero on $\Delta_{n}$. Now the identity theorem for holomorphic 
functions shows that $f$ is identically zero on $\Delta$. 
\end{proof}

\begin{lem}\label{suff} 
Suppose that $U$ has an eigenvalue $\lambda$. 
Then $\lambda$ is an eigenvalue of $\wh{U}(z)$ for any $z \in T_{\mb{C}}^{d}$. 
\end{lem}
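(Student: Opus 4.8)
The plan is to reduce the statement to a vanishing assertion for the characteristic polynomial $\chi(\zeta,z)=\det(\zeta-\wh{U}(z))$ introduced in $\eqref{char1}$. Indeed, $\lambda$ is an eigenvalue of the matrix $\wh{U}(z)$ precisely when $\chi(\lambda,z)=0$, so it suffices to prove that the one-variable specialization $z \mapsto \chi(\lambda,z)$ vanishes identically on the complex torus $T_{\mb{C}}^{d}$.

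First I would transfer the eigenvalue hypothesis to the Fourier side. Let $w \in H$ be a nonzero eigenvector with $Uw=\lambda w$, and set $g=\fcal^{*}w \in \hcal=L^{2}(T^{d},\mb{C}^{D})$, so that $g \neq 0$ and $\ucal g = \lambda g$. By $\eqref{UT1}$ this means $\wh{U}(z)g(z)=\lambda g(z)$ for $\nu_{d}$-almost every $z \in T^{d}$. Since $g$ is a nonzero element of $L^{2}$, the set $A=\{z \in T^{d}\,;\,g(z) \neq 0\}$ has positive Lebesgue measure, and at each point $z \in A$ the vector $g(z)$ is a nonzero eigenvector of $\wh{U}(z)$ for the eigenvalue $\lambda$. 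Hence $\chi(\lambda,z)=0$ on $A$, so $\chi(\lambda,\cdot)^{-1}(0) \cap T^{d}$ has positive measure.

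Next I would invoke holomorphy together with Lemma $\ref{analN}$. Because $\wh{U}(z)=\sum_{\alpha \in S}z^{\alpha}C(\alpha)$ is a Laurent polynomial in $z$, the function $z \mapsto \chi(\lambda,z)$ is a Laurent polynomial as well, and is therefore holomorphic on the connected domain $T_{\mb{C}}^{d}=(\mb{C}\setminus\{0\})^{d}$ (alternatively, one clears denominators by multiplying by a monomial $z^{\beta}$ to obtain an entire function on $\mb{C}^{d}$, which is nonvanishing on $T^{d}$ and hence does not alter the zero set there). Applying Lemma $\ref{analN}$ to this holomorphic function, whose zero set meets $T^{d}$ in a set of positive measure, I conclude that $\chi(\lambda,z)=0$ for every $z \in T_{\mb{C}}^{d}$. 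Equivalently, $\lambda$ is an eigenvalue of $\wh{U}(z)$ for all $z \in T_{\mb{C}}^{d}$, which is the assertion.

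Given Lemma $\ref{analN}$, the argument is short; its conceptual content is the analytic continuation from the real torus $T^{d}$, where the eigenvector lives, to the full complex torus $T_{\mb{C}}^{d}$, and the genuine work is already carried out in the proof of Lemma $\ref{analN}$ itself. The only points demanding care are that $g$ is nonzero on a set of positive measure (immediate from $g \neq 0$ in $L^{2}$) and that the characteristic polynomial is honestly holomorphic in $z$ on $T_{\mb{C}}^{d}$ (immediate from $\wh{U}(z)$ being a Laurent polynomial); neither presents a real obstacle.
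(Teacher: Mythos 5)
Your proof is correct and is essentially identical to the paper's: both take a nonzero $L^{2}$ eigenfunction on the Fourier side, note that $\wh{U}(z)g(z)=\lambda g(z)$ forces the Laurent polynomial $\chi(\lambda,\cdot)$ to vanish on the positive-measure set where $g(z)\neq 0$, and conclude via Lemma $\ref{analN}$ that it vanishes on all of $T_{\mb{C}}^{d}$. The only differences are cosmetic (you start from an eigenvector of $U$ and transport it by $\fcal^{*}$, where the paper works directly with $\ucal$), so there is nothing to add.
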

\begin{proof}
Let $0 \neq w \in \hcal$ be an eigenvector of $\ucal$ with the eigenvalue $\lambda$. 
Then, there exists a Borel set $W \subset T^{d}$ such that $\nu_{d}(W)=1$ and $\wh{U}(z)w(z)=\lambda w(z)$ for all $z \in W$. 
We may set $w(z)=0$ for $z \in T^{d} \setminus W$. 
We have $\chi_{\lambda}(z)=0$ for any $z \in T^{d} \setminus w^{-1}(0)$, 
where $\chi_{\lambda}(z):=\chi(\lambda,z)$ is a Laurent polynomial in $z \in T_{\mb{C}}^{d}$. 
Since $w \neq 0$, we have $\nu_{d}(T^{d} \setminus w^{-1}(0))>0$. 
Therefore, Lemma $\ref{analN}$ shows that the function $\chi_{\lambda}$ 
is identically zero on $T_{\mb{C}}^{d}$. Thus, $\lambda$ is an eigenvalue of $\wh{U}(z)$ 
for any $z \in T_{\mb{C}}^{d}$. 
\end{proof}

We have to show the converse to Lemma $\ref{suff}$. 
Suppose that $\lambda \in S^{1}$ is an eigenvalue of the unitary matrix $\wh{U}(z)$ for any $z \in T^{d}$.  
By Lemma $\ref{analN}$, we have $\chi(\lambda,z)=0$ for any $z \in T_{\mb{C}}^{d}$. 
Therefore, for each fixed $z \in T_{\mb{C}}^{d}$, the polynomial $\chi(\zeta,z)$ in $\zeta \in \mb{C}$ is 
divisible by $\zeta-\lambda$. 
However, the multiplicity of $\zeta-\lambda$ in $\chi(\zeta,z)$ depends on $z \in T_{\mb{C}}^{d}$. 
To handle this situation, 
let $R_{d}$ denote the ring of Laurent polynomials in $z=(z_{1},\ldots,z_{d})$ with complex coefficients. 
The quotient field of $R_{d}$, denoted by $k_{d}$, is the field of rational functions on $\mb{C}^{d}$. 
Let $R_{d}[\zeta]$, {\it resp.} $k_{d}[\zeta]$, be the ring of polynomials in one variable $\zeta$ 
with coefficients in $R_{d}$, {\it reps.} in $k_{d}$. 
We have $\chi \in R_{d}[\zeta] \subset k_{d}[\zeta]$. 
Since $\chi(\lambda,z)=0$ for any $z \in T_{\mb{C}}^{d}$, 
the polynomial $\zeta -\lambda \in k_{d}[\zeta]$ divides $\chi$ in $k_{d}[\zeta]$. 
\begin{lem}\label{fac1}
Let $p,q \in k_{d}[\zeta]$ be monic polynomials and let $r \in R_{d}[\zeta]$. If $r=pq$, then $p,q \in R_{d}[\zeta]$. 
\end{lem}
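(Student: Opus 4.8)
The plan is to recognize Lemma \ref{fac1} as an instance of the classical fact that, over an integrally closed domain, the monic factors of a monic polynomial automatically have their coefficients in the ring itself. First I would note that, since $p$ and $q$ are monic, their product $r=pq$ is monic as well, so $r$ is a \emph{monic} polynomial lying in $R_d[\zeta]$. The essential structural input is that $R_d$ is integrally closed in its quotient field $k_d$; this is immediate from the fact, already recorded in the text, that $R_d$ is a UFD, since every UFD is integrally closed in its field of fractions.

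Next I would pass to an algebraic closure $\overline{k}_d$ of $k_d$ and split $r$ into linear factors there, writing $r(\zeta,z)=\prod_{i=1}^{D}(\zeta-\alpha_i)$ with $\alpha_i \in \overline{k}_d$. Because $r$ is monic with coefficients in $R_d$, each root $\alpha_i$ satisfies a monic equation over $R_d$ and is therefore integral over $R_d$. Since $p$ is a monic divisor of $r$ in $k_d[\zeta]$, over $\overline{k}_d$ it is the product of the linear factors indexed by some subset $I\subset\{1,\dots,D\}$, so that $p=\prod_{i\in I}(\zeta-\alpha_i)$; consequently the coefficients of $p$ are, up to sign, the elementary symmetric functions of the $\alpha_i$ with $i\in I$. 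As the elements of $\overline{k}_d$ integral over $R_d$ form a ring, these symmetric functions are again integral over $R_d$. But the coefficients of $p$ lie in $k_d$, and $R_d$ is integrally closed in $k_d$; hence every coefficient of $p$ lies in $R_d$, i.e.\ $p\in R_d[\zeta]$. Applying the same argument to $q$ yields $q\in R_d[\zeta]$, which proves the lemma.

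I do not anticipate a genuine obstacle, as every ingredient is standard commutative algebra; the one fact that must be invoked rather than computed is the integral closedness of $R_d$ in $k_d$, and this is exactly where the UFD property of $R_d$ (arising, as noted above, from $R_d$ being a localization of $\mb{C}[z_1,\dots,z_d]$) enters. As an alternative to passing to $\overline{k}_d$, one could argue purely by Gauss's lemma: writing $p=\tilde{p}/u$ and $q=\tilde{q}/v$, where $\tilde{p},\tilde{q}\in R_d[\zeta]$ are primitive and $u,v\in R_d$ are their leading coefficients, the relation $r\,uv=\tilde{p}\,\tilde{q}$ together with primitivity of $\tilde{p}\tilde{q}$ (Gauss) and the fact that the monic polynomial $r$ has unit content forces $uv$, and hence each of $u$ and $v$, to be a unit of $R_d$; then $p=\tilde{p}/u\in R_d[\zeta]$ and likewise $q\in R_d[\zeta]$. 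This route makes transparent that Lemma \ref{fac1} generalizes Lemma \ref{Lau1}.
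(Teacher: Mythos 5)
Your proof is correct, and it takes a genuinely different route from the paper's. The paper does not use integral closure at all: it invokes Lemma 2 of the Supplement of Baumg\"artel's book \cite{Ba} to conclude that the coefficients of $p$ and $q$ are holomorphic on the complex torus $T_{\mathbb{C}}^{d}$, and then devotes the bulk of the proof to showing by hand (induction on $d$, using resultants and Hilbert's Nullstellensatz) that a rational function holomorphic on all of $T_{\mathbb{C}}^{d}$ must be a Laurent polynomial. You replace this analytic--algebraic hybrid with the standard commutative-algebra fact that $R_{d}$, being a UFD (a localization of $\mathbb{C}[z_{1},\ldots,z_{d}]$, as the paper itself notes), is integrally closed in $k_{d}$: the roots of the monic polynomial $r\in R_{d}[\zeta]$ in $\overline{k}_{d}$ are integral over $R_{d}$, the coefficients of the monic divisor $p$ are elementary symmetric functions of a sub-multiset of these roots, hence integral over $R_{d}$ and lying in $k_{d}$, hence in $R_{d}$. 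This is shorter, avoids the analytic input entirely, and proves the statement in the generality of an arbitrary integrally closed coefficient domain; your Gauss's-lemma variant is also sound (monicity of $r$ gives unit content, so $uv$ and hence $u$ and $v$ are units of $R_{d}$) and, as you say, makes the lemma a transparent generalization of the coefficient-comparison argument in Lemma \ref{Lau1}. What the paper's route buys instead is the explicit intermediate fact that rational functions holomorphic on $T_{\mathbb{C}}^{d}$ are Laurent polynomials, in keeping with its function-theoretic viewpoint (cf.\ Lemma \ref{analN}); but as a proof of Lemma \ref{fac1} itself, yours is the more economical and more general one.
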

\begin{proof}
It follows from Lemma 2 in \cite[Supplement]{Ba} that each coefficient of $p$ and $q$ is holomorphic on $T_{\mb{C}}^{d}$. 
Thus, what we need is to prove that a rational function $m(z)$ in 
$z \in \mb{C}^{d}$ which is holomorphic on $T_{\mb{C}}^{d}$ 
is actually a Laurent polynomial in $z$. We use an induction on $d$. 
Let us consider the case where $d=1$. Let $g/f$ ($f,g \in \mb{C}[z]$) be a rational function holomorphic on $T_{\mb{C}}^{1}$. 
We may suppose that $f$ and $g$ have no common roots. Since $g/f$ is holomorphic on $T_{\mb{C}}^{1}$, it is possible for $f$ to 
have a root only at the origin. Thus, $f$ is a monomial and hence $g/f$ is a Laurent polynomial. 

Suppose that the assertion holds in the dimension $d-1$. 
For $z \in \mb{C}^{d}$, we write, as before, $z=(\xi,w)$ with $\xi \in \mb{C}$ and $w=(w_{1},\ldots,w_{d-1}) \in \mb{C}^{d-1}$. 
Let $m(\xi,w)$ be a rational function in $(\xi,w) \in \mb{C}^{d}$ holomorphic in $T_{\mb{C}}^{d}$. We write $m(\xi,w)=g(\xi,w)/f(\xi,w)$ with 
polynomials $f$ and $g$ having no common irreducible factors in $\mb{C}[\xi,w]$. 
The assertion is trivial if $f$ is constant. Thus, suppose that $f$ is non constant. 
First, suppose that $g$ is a non-zero constant. 
Since $m=g/f$ is holomorphic on $T_{\mb{C}}^{d}$, $f$ can not be zero on $T_{\mb{C}}^{d}$. Let $s=\xi w_{1}\cdots w_{d-1}$. 
By Hilbert's Nullstelensatz (see, for instance, \cite[Chapter 4]{CLO}), each irreducible factor of $f$ divides $s$, and hence $f$ is a monomial. 
Next, suppose that $g$ is not constant. 
We may suppose that $g$ has a positive degree in $\xi$, and we write $g=b_{0}\xi^{m}+\cdots+b_{m}$ 
with $m>0$, $b_{j} \in \mb{C}[w]$ and $b_{0} \neq 0$. If $f$ is constant in $\xi$, then $f \in \mb{C}[w]$, and 
$m=(b_{0}/f)\xi^{m}+\cdots +(b_{m}/f)$. 
Since $b_{j}/f$'s are rational functions on $\mb{C}^{d-1}$ holomorphic in $T_{\mb{C}}^{d-1}$, the inductive hypothesis 
shows that these are Laurent polynomials in $w$. Thus, we may suppose that $f$ and $g$ have positive degrees in $\xi$. 
Let $f$ be a polynomial in $\xi$ of degree $l>0$, and we write $f=a_{0}\xi^{l}+\cdots+ a_{l}$ with $a_{j} \in \mb{C}[w]$ and $a_{0} \neq 0$. 
Let $r \in \mb{C}[w]$ be the resultant of the polynomials $f$, $g$ with respect to $\xi$. 
(See, for instance, \cite[Chapter 3]{CLO} for the properties of resultants.) Since $f$ and $g$ have no common 
factors, $r$ is not the zero polynomial. 
Suppose that $a_{l} \neq 0$. 
We set $a=a_{l}r \in \mb{C}[w]$. 
Then the polynomial $a$ is not the zero polynomial and $U=\{w \in \mb{C}^{d-1}\,;\,a(w) \neq 0\}$ is an open dense subset in $\mb{C}^{d-1}$. 
Fix $w_{o} \in U \cap T_{\mb{C}}^{d-1}$. Since $r(w_{o}) \neq 0$, $f(\xi,w_{o})$ and $g(\xi,w_{o})$ have no common zeros as 
polynomials in $\xi \in \mb{C}$. 
Since $m=g/f$ is holomorphic on $T_{\mb{C}}^{d}$, $f(\xi,w_{o})=0$ only at $\xi=0$. 
However, since $a_{l}(w_{o}) \neq 0$, we have $f(\xi,w_{o}) \neq 0$ for any $\xi \in \mb{C}$, which is a contradiction to the assumption that 
$f$ has a positive degree in $\xi$. Thus, $a_{l}$ is zero. 
This shows that $f$ is divisible by $\xi$. We write $f=\xi^{k}h$ where $k>0$ and $h$ 
is a polynomial in $\mb{C}[\xi,w]$ which is not divisible by $\xi$. 
Then $\xi^{k}m=g/h$ is again a rational function holomorphic on $T_{\mb{C}}^{d}$. 
Applying the above discussion shows that $h$ must be a constant in $\xi$. 
Thus, $h \in \mb{C}[w]$. 
Therefore, the previous discussion shows that $\xi^{k}m=g(\xi,w)/h(w)$ is a Laurent polynomial in $(\xi,w)$. 
\end{proof}

\vspace{10pt}

\noindent{\it Proof of Theorem $\ref{main0}$, {\rm (1)}.}\hspace{2pt}
Suppose, as above, that $\lambda \in S^{1}$ is an eigenvalue of $\wh{U}(z)$ for any $z \in T^{d}$ and hence for any $z \in T_{\mb{C}}^{d}$. 
We divide $\chi$ by $\zeta -\lambda$ in $k_{d}[\zeta]$ and write $\chi(\zeta,z)=(\zeta -\lambda)^{m} q(\zeta,z)$, 
where $q$ is not divisible by $\zeta -\lambda$ in $k_{d}[\zeta]$. By Lemma $\ref{fac1}$, $q$ is in $R_{d}[\zeta]$. 
By the Hamilton-Cayley theorem, we have 
\begin{equation}\label{HC1}
0=\chi(\wh{U}(z),z)=(\wh{U}(z)-\lambda)^{m} q(\wh{U}(z),z)
\end{equation}
as a matrix for any $z \in T_{\mb{C}}^{d}$. We note that $q(\wh{U}(z),z)$ is not identically zero on $T^{d}$. 
Indeed, suppose contrary that $q(\wh{U}(z),z)=0$ for any $z \in T^{d}$. 
Then, for any fixed $z \in T^{d}$, $q(\zeta,z)$ is divisible by the minimal polynomial of the matrix $\wh{U}(z)$. 
Since $\wh{U}(z)$ has $\lambda$ as an eigenvalue, $q(\zeta,z)$ is divisible by $\zeta-\lambda$ 
for any $z \in T^{d}$. Hence $q(\lambda,z)=0$ for any $z \in T_{\mb{C}}^{d}$ by Lemma $\ref{analN}$. 
Thus, $q$ is divisible by $\zeta -\lambda$ in $k_{d}[\zeta]$ which is a contradiction. 
Therefore, $q(\wh{U}(z),z)$ is not identically zero. 
By the Fourier transform, the bounded operator $q(U,\tau)$ on $H$ is not identically zero. 
From $\eqref{HC1}$, it follows that $0=\chi(U,\tau)=(U-\lambda)^{m}q(U,\tau)$. 
We take $w \in H$ such that $h:=q(U,\tau)w$ is not zero. Since $(U-\lambda)^{m}h=0$, there exists a number $j$ ($1 \leq j <m$) such 
that $(U-\lambda)^{j}h \neq 0$ and $(U-\lambda)^{j+1}h=0$. Then $v=(U-\lambda)^{j}h$ is an eigenvector of $U$ with 
the eigenvalue $\lambda$. 
\hfill$\square$

To close this section, it would be worth mentioning that the proof given above also shows the following
\begin{prop}
Let $\lambda$ be an eigenvalue of the periodic unitary transition operator $U$ on $H=\ell^{2}(\mb{Z}^{d},W)$. 
We decompose the characteristic polynomial $\chi(\zeta,z)$ in $k_{d}[\zeta]$ 
in the form $\chi(\zeta,z)=(\zeta -\lambda)^{m}q(\zeta,z)$, where $q(\zeta,z) \in R_{d}[\zeta]$ is an 
irreducible polynomial in $R_{d}[\zeta]$. Then, for any $w \in \ell^{2}(\mb{Z}^{d},W)$, 
we have $q(U,\tau)w=0$ or 
$(U-\lambda)^{j}q(U,\tau)w$ is an eigenfunction of $U$ for some $j$ with $1 \leq j <m$. 
\end{prop}

\section{Absolute continuity of the continuous spectrum}\label{ABS}

Finally we discuss the absolute continuity of the continuous spectrum 
of a periodic unitary transition operator $U$. 
As before, let $\spec(U)_{p}=\{\omega_{1},\ldots,\omega_{K}\}$ be the set of eigenvalues of $U$ 
and let $\pi_{j}$ be the orthogonal projection onto the eigenspace of $U$ corresponding to $\omega_{j}$. 
We set $\pi_{p}=\pi_{1}+\cdots+\pi_{K}$ and $\pi_{c}=I-\pi_{p}$. If $\spec(U)_{p}=\emptyset$, then we set $\pi_{p}=0$. 
The characteristic polynomial $\chi$ defined in $\eqref{char1}$ 
is divisible by $\zeta -\omega_{j}$ in the ring of polynomials $k_{d}[\zeta]$ with coefficients in the field $k_{d}$ of 
rational functions in $z=(z_{1},\ldots,z_{d})$. 
The characteristic polynomial $\chi$ is monic in the sense that the coefficient of the leading term of $\chi$ is the identity in $k_{d}$. 
The ring $k_{d}[\zeta]$ is a unique factorization domain (UFD for short), and hence $\chi \in k_{d}[\zeta]$ can be decomposed into 
powers of monic irreducible elements in $k_{d}[\zeta]$. It should be remarked that the ring $R_{d}$ of Laurent polynomials in $z=(z_{1},\ldots,z_{d})$ is 
a UFD because it is a localization of the ring $\mb{C}[z_{1},\ldots,z_{d}]$ of polynomials in $z$. Thus, $R_{d}[\zeta]$ is also a UFD, 
and hence $\chi$ can be decomposed into irreducibles in $R_{d}[\zeta]$. However, 
we need to use some properties of the notion of discriminants which are valid for polynomials with coefficients in a field. 
Since the polynomials $\zeta -\omega_{j}$ are irreducible, 
the decomposition of $\chi$ into monic irreducible elements in $k_{d}[\zeta]$ can be written as 
\begin{equation}
\chi(\zeta,z)=\prod_{j=1}^{K}(\zeta -\omega_{j})^{m_{j}} \times \prod_{\rho=1}^{r}\pi_{\rho}(\zeta,z)^{n_{\rho}}, 
\end{equation}
where $\pi_{\rho} \in k_{d}[\zeta]$ are monic irreducible polynomials in $k_{d}[\zeta]$, 
and $\zeta-\omega_{j}$ ($j=1,\ldots,K$) and $\pi_{\rho}$ ($\rho=1,\ldots,r$) are mutually different. We set
\begin{equation}\label{decP2}
\pi(\zeta,z)=\prod_{j=1}^{K}(\zeta -\omega_{j}) \times \prod_{\rho=1}^{r}\pi_{\rho}(\zeta,z)
\end{equation}
so that $\chi^{-1}(0)=\pi^{-1}(0) \subset T_{\mb{C}}^{d+1}$. 
If $\pi_{\rho}$ ($1 \leq \rho \leq r$) is of the form $\zeta -\lambda$ with a constant $\lambda \in \mb{C}$, 
then $\chi(\lambda,z)=0$ for any $z \in T_{\mb{C}}^{d}$ and hence by Theorem $\ref{main0}$, {\rm (1)}, $\lambda$ 
is an eigenvalue of the operator $U$. Thus, $\lambda=\omega_{j}$ for some $j=1,\ldots,K$. 
However, $\pi_{\rho}$ is different from $\zeta-\omega_{j}$, and hence it is a contradiction. 
Thus, $\pi_{\rho}$ is not of the form $\zeta- \lambda$ with a constant $\lambda$. 
By Lemma $\ref{fac1}$, each polynomial $\pi_{\rho}$ in $\eqref{decP2}$ is in $R_{d}[\zeta]$. 
Let $d_{\pi}$ denote the discriminant of $\pi$, namely, $d_{\pi}=\pm\, \mbox{resultant of $\pi$ and $\partial_{\zeta}\pi$}$. 
$d_{\pi}$ is a Laurent polynomial in $z \in T_{\mb{C}}^{d}$. 
Note that $d_{\pi}$ is identically zero if and only if $\pi$ and $\partial_{\zeta}\pi$ has a common irreducible factor in $k_{d}[\zeta]$. 
Since $\pi$ is a product of mutually different irreducible elements in $k_{d}[\zeta]$, $d_{\pi}$ is not identically zero. 
Hence the discriminant set $D=d_{\pi}^{-1}(0) \subset T_{\mb{C}}^{d}$ 
is an algebraic variety such that $T_{\mb{C}}^{d} \setminus D$ is a connected open dense set (\cite[Chapter 1]{GR}). 
By Lemma $\ref{analN}$, $T^{d} \cap D$ is a closed set in $T^{d}$ having Lebesgue measure zero. 
For any $z \in T_{\mb{C}}^{d} \setminus D$, the equation $\pi(\zeta,z)=0$ has $K+L=\deg \pi$ distinct roots 
in $\mb{C}$ because $d_{\pi}=d_{\pi}(z)$ is the discriminant of $\pi(\zeta,z)$ in $\mb{C}[\zeta]$ when $z$ is fixed and 
hence $\pi(\zeta,z)$ does not have multiple roots for fixed $z \not\in D$. (See \cite{CLO} for the properties of discriminants.) 

\begin{lem}\label{eigen1}
Let $z_{o} \in T_{\mb{C}}^{d} \setminus D$. Then, there exists a neighborhood $V \subset T_{\mb{C}}^{d} \setminus D$ of $z_{o}$ and holomorphic 
functions $\lambda_{1}(z),\ldots,\lambda_{L}(z)$ on $V$ such that, for each $z \in V$, the roots of the equation $\pi(\zeta,z)=0$ 
is given by $\omega_{1},\ldots,\omega_{K}$, $\lambda_{1}(z),\ldots,\lambda_{L}(z)$. 
\end{lem}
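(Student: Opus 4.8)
The plan is to obtain each non-constant branch $\lambda_{i}(z)$ by applying the holomorphic implicit function theorem at a simple root of $\pi(\,\cdot\,,z_{o})$, and then to match the branch count against $\deg\pi$. First I would record that the constant roots persist: by the factorization $\eqref{decP2}$ together with Lemma $\ref{fac1}$, every factor of $\pi$ lies in $R_{d}[\zeta]$, so $\pi(\omega_{j},z)=0$ identically on $T_{\mb{C}}^{d}$ and $\omega_{1},\ldots,\omega_{K}$ are roots of $\pi(\,\cdot\,,z)$ for every $z$. Since $z_{o}\notin D$, the polynomial $\pi(\,\cdot\,,z_{o})$ has no multiple roots and hence exactly $\deg\pi=K+L$ distinct roots. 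Discarding $\omega_{1},\ldots,\omega_{K}$ leaves $L$ further roots $\mu_{1},\ldots,\mu_{L}$, each simple and distinct from every $\omega_{j}$.

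Next, for each $i=1,\ldots,L$ the simplicity of $\mu_{i}$ yields $\partial_{\zeta}\pi(\mu_{i},z_{o})\neq 0$. As $\pi(\zeta,z)$ is holomorphic in $(\zeta,z)$ on $\mb{C}\times T_{\mb{C}}^{d}$ — its coefficients in $\zeta$ are Laurent polynomials in $z$ — the holomorphic implicit function theorem furnishes a neighborhood $V_{i}$ of $z_{o}$ and a holomorphic function $\lambda_{i}$ on $V_{i}$ with $\lambda_{i}(z_{o})=\mu_{i}$ and $\pi(\lambda_{i}(z),z)=0$ for all $z\in V_{i}$. I would then set $V=(T_{\mb{C}}^{d}\setminus D)\cap\bigcap_{i=1}^{L}V_{i}$, which is an open neighborhood of $z_{o}$ contained in $T_{\mb{C}}^{d}\setminus D$ because $D$ is closed, and restrict each $\lambda_{i}$ to $V$.

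For $z\in V$ the $K+L$ values $\omega_{1},\ldots,\omega_{K},\lambda_{1}(z),\ldots,\lambda_{L}(z)$ are all roots of $\pi(\,\cdot\,,z)$. Since $z\in T_{\mb{C}}^{d}\setminus D$ forces $\pi(\,\cdot\,,z)$ to have exactly $K+L$ distinct roots, these displayed values must be mutually distinct and must exhaust the root set, which is the assertion. The only point demanding care is precisely this last bookkeeping: ensuring that the $L$ implicit branches neither collide with one another nor with the constant roots $\omega_{j}$ anywhere on $V$. This is not a genuine analytic difficulty but is enforced structurally by confining $z$ to $T_{\mb{C}}^{d}\setminus D$, where simplicity of all roots simultaneously legitimizes the implicit function theorem (through $\partial_{\zeta}\pi\neq 0$) and pins the number of distinct roots to $\deg\pi$; no branch can merge with another or drift onto some $\omega_{j}$ without producing a multiple root, which is impossible off $D$.
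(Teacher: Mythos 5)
Your construction coincides with the paper's proof: since $z_{o}\notin D$ every root of $\pi(\cdot,z_{o})$ is simple, so $\partial_{\zeta}\pi\neq 0$ there, and the holomorphic implicit function theorem yields the branches $\lambda_{1},\ldots,\lambda_{L}$ through the $L$ non-constant roots; the paper does exactly this (even more tersely, leaving the final bookkeeping implicit), and your preliminary observation that the $\omega_{j}$ remain roots for all $z$ is correct via the factorization $\eqref{decP2}$ and Lemma $\ref{fac1}$.

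The one step that does not hold as you state it is the closing ``structural'' claim that on your (unshrunk) $V$ no branch can collide with another branch or with some $\omega_{j}$ ``without producing a multiple root, which is impossible off $D$.'' Pointwise this is not a valid inference: if $\lambda_{1}(z_{1})=\lambda_{2}(z_{1})=\mu$ at some $z_{1}\in V$ far from $z_{o}$, that only says two function values coincide; it does not make $\mu$ a multiple root of the polynomial $\pi(\cdot,z_{1})$ --- multiplicity is a property of $\pi$, not of how many holomorphic parametrizations pass through a root. And without distinctness your counting argument also fails to give exhaustion (a collision would leave some root of $\pi(\cdot,z_{1})$ uncovered, with no contradiction yet). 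Two standard repairs: (a) simply shrink $V$, using continuity to keep the $K+L$ values $\omega_{j},\lambda_{i}(z)$ in disjoint disks around their values at $z_{o}$ --- this is what the paper implicitly does, and what Lemma $\ref{eigenP}$ does explicitly with the contour $\Gamma$; or (b) keep your larger $V$ but first arrange it to be connected (your intersection of unspecified neighborhoods need not be; taking the $V_{i}$ to be polydisks works, since the complement of the analytic set $D$ in a connected open set is still connected), and then argue: at a collision point the root is simple, so the uniqueness part of the implicit function theorem forces $\lambda_{1}\equiv\lambda_{2}$ near $z_{1}$, hence on all of $V$ by the identity theorem, contradicting $\lambda_{1}(z_{o})\neq\lambda_{2}(z_{o})$; the same applies to a collision with a constant root, since $\zeta\equiv\omega_{j}$ is also a local solution through $(\omega_{j},z_{1})$ because $\pi(\omega_{j},\cdot)\equiv 0$. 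Only after such an argument --- equivalently, after establishing the factorization $\pi(\zeta,z)=\prod_{j}(\zeta-\omega_{j})\prod_{i}(\zeta-\lambda_{i}(z))$ on the connected $V$ --- does ``collision implies multiple root'' become legitimate.
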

\begin{proof}
Let $\omega_{1},\ldots,\omega_{K}$, $\lambda_{1},\ldots,\lambda_{L}$ be the roots of the equation $\pi(\zeta,z_{o})=0$, 
each of which is different from others. Let $\lambda_{o}$ denote one of $\lambda_{j}$'s. 
Since $z_{o} \not\in D$ and since $\pi(\lambda_{o},z_{o})=0$, we have $\partial_{\zeta}\pi (\lambda_{o},z_{o}) \neq 0$. 
Applying the implicit function theorem (\cite[Chapter 1]{GR}), there exists a neighborhood $V$ of $z_{o}$ and 
a holomorphic function $\lambda(z)$ on $V$ such that $\pi(\lambda(z),z)=0$ for any 
$z \in V$ and $\lambda(z_{o})=\lambda_{o}$. This shows the lemma. 
\end{proof}

The projections onto algebraic (generalized) eigenspaces (called the eigenprojection in \cite{Ka}) 
are also holomorphic near $z_{o} \in T^{d} \setminus D$. Namely, we have the following. 
\begin{lem}\label{eigenP}
Let $z_{o} \in T_{\mb{C}}^{d} \setminus D$. Let $V_{o}$ be an open neighborhood of $z_{o}$ in $T_{\mb{C}}^{d} \setminus D$ 
as in Lemma $\ref{eigen1}$. Then there is an open neighborhood $V_{1}$ of $z_{o}$ in $V_{o}$ and 
holomorphic projection-valued functions $R_{j}(z)$ $(1 \leq j \leq K)$, $P_{k}(z)$ $(1 \leq k \leq L)$ such 
that $R_{j}(z)$, $P_{k}(z)$ are the projections onto the algebraic eigenspaces corresponding to $\omega_{j}$, $\lambda_{k}(z)$, respectively, 
for each $z \in V_{1}$. 
\end{lem}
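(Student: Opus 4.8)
The plan is to build the eigenprojections by the Riesz projection formula, integrating the resolvent of $\wh{U}(z)$ over small contours that separate the distinct eigenvalues. Recall that $\wh{U}(z)$ is a Laurent polynomial in $z$, hence a holomorphic $D\times D$-matrix-valued function on $T_{\mb{C}}^{d}$, and that the roots of $\pi(\cdot,z)$ are precisely the distinct eigenvalues of $\wh{U}(z)$ (since $\pi^{-1}(0)=\chi^{-1}(0)$). By Lemma $\ref{eigen1}$, on the neighborhood $V_{o}$ these eigenvalues are exactly $\omega_{1},\ldots,\omega_{K},\lambda_{1}(z),\ldots,\lambda_{L}(z)$; because $z_{o}\notin D$, they are mutually distinct at $z=z_{o}$, so each is an isolated point of the spectrum of $\wh{U}(z_{o})$.

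First I would fix, at the base point $z_{o}$, pairwise disjoint positively oriented circles $\Gamma_{1},\ldots,\Gamma_{K}$ and $\gamma_{1},\ldots,\gamma_{L}$ in $\mb{C}$, chosen so that $\Gamma_{j}$ encloses $\omega_{j}$ alone and $\gamma_{k}$ encloses $\lambda_{k}(z_{o})$ alone, with no other eigenvalue lying on or inside any contour. The $\omega_{j}$ are constant in $z$ and the $\lambda_{k}(z)$ are holomorphic, hence continuous, on $V_{o}$; therefore there is a smaller neighborhood $V_{1}\subset V_{o}$ of $z_{o}$ such that for every $z\in V_{1}$ the eigenvalue $\omega_{j}$ (resp. $\lambda_{k}(z)$) still lies inside $\Gamma_{j}$ (resp. $\gamma_{k}$), while no remaining eigenvalue meets any contour. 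In particular $\det(\zeta-\wh{U}(z))=\chi(\zeta,z)\neq 0$ for all $z\in V_{1}$ and all $\zeta$ on any of the contours, so the resolvent is defined there.

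On $V_{1}$ I would then set
\begin{equation}
R_{j}(z)=\frac{1}{2\pi i}\oint_{\Gamma_{j}}(\zeta-\wh{U}(z))^{-1}\,d\zeta,\qquad
P_{k}(z)=\frac{1}{2\pi i}\oint_{\gamma_{k}}(\zeta-\wh{U}(z))^{-1}\,d\zeta.
\end{equation}
Since $\wh{U}(z)$ is holomorphic and $\zeta-\wh{U}(z)$ is invertible for $\zeta$ on the contours and $z\in V_{1}$, the integrand $(\zeta,z)\mapsto(\zeta-\wh{U}(z))^{-1}$ is jointly continuous and holomorphic in $z$; differentiating under the integral sign (or applying Morera's theorem entrywise) shows that $R_{j}(z)$ and $P_{k}(z)$ are holomorphic matrix-valued functions on $V_{1}$. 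By the standard Riesz--Dunford functional calculus, the eigenprojections of \cite{Ka}, each contour encloses a single eigenvalue, so $R_{j}(z)$ and $P_{k}(z)$ are idempotents projecting onto the algebraic (generalized) eigenspaces associated with $\omega_{j}$ and $\lambda_{k}(z)$ respectively; this identification is purely pointwise in $z$ and uses only that the enclosed eigenvalue is isolated.

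The main obstacle is precisely the uniform choice of contours in the final step: one must guarantee that as $z$ ranges over a full neighborhood of $z_{o}$ no eigenvalue crosses a contour and no two of the separated eigenvalues collide. This is where the hypothesis $z_{o}\notin D$ is essential, ensuring that the roots of $\pi(\cdot,z_{o})$ are simple and hence mutually separated, combined with the continuity of the $\lambda_{k}$ supplied by Lemma $\ref{eigen1}$; a compactness argument on the union of the (compact) contours then produces the neighborhood $V_{1}$. Everything else, namely the holomorphy of the resolvent integral and the idempotent/eigenspace identity, is the routine finite-dimensional instance of Kato's perturbation theory.
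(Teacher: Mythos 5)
Your proof is correct and is essentially the paper's own argument: both define the projections by the Riesz contour integral $\frac{1}{2\pi i}\oint_{\Gamma}(\zeta-\wh{U}(z))^{-1}\,d\zeta$, use the continuity of the eigenvalue functions from Lemma \ref{eigen1} (and the simplicity of the roots of $\pi(\cdot,z_{o})$ guaranteed by $z_{o}\notin D$) to shrink to a neighborhood $V_{1}$ on which the contours still separate the eigenvalues, and read off holomorphy of $R_{j}(z)$, $P_{k}(z)$ from the holomorphic dependence of the resolvent on $z$. The only cosmetic difference is that you treat all eigenvalues simultaneously with a family of disjoint contours, while the paper handles a single generic eigenvalue $\lambda_{o}$ with one contour $\Gamma$.
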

\begin{proof}
As before, let $\omega_{1},\ldots,\omega_{K}$, $\lambda_{1},\ldots,\lambda_{L}$ be mutually different eigenvalues of 
the matrix $\wh{U}(z_{o})$. Let $\lambda_{o}$ be one of them. 
We take a small circle $\Gamma$ around $\lambda_{o}$ such that $\Gamma$ and the disk bounded by $\Gamma$ do not 
contain any other eigenvalues. Let $\lambda_{o}(z)$ denote the eigenvalue of $\wh{U}(z)$ in $V_{o}$ such that $\lambda_{o}(z_{o})=\lambda_{o}$ 
which is holomorphic on $V_{o}$, 
and let $\lambda(z)$ be any other holomorphic function which is an eigenvalue of $\wh{U}(z)$ in $V_{o}$. 
Since $\lambda_{o}(z)$ and $\lambda(z)$ are continuous, there exists a neighborhood $V_{1}$ of $z_{o}$ in $V_{o}$ such that, 
for each $z \in V_{1}$, $\lambda(z)$ is not contained in $\Gamma$ and the disk bounded by $\Gamma$ and 
$\lambda_{o}(z)$ is inside $\Gamma$. 
Then, the eigenprojection $P(z)$ onto the algebraic eigenspace corresponding to $\lambda_{o}(z)$ 
along other algebraic eigenspaces is given by the contour integral (\cite[Chapter 1]{Ka})
\begin{equation}\label{cont}
P(z)=\frac{1}{2\pi i}\int_{\Gamma} (\zeta -\wh{U}(z))^{-1}\,d\zeta. 
\end{equation}
From this expression, $P(z)$ is holomorphic in $z \in V_{1}$. 
\end{proof}
We remark that, when $z_{o} \in T^{d} \setminus D$, $R_{j}(z)$ and $P_{k}(z)$ in Lemma $\ref{eigenP}$ are 
smooth functions on $T^{d} \cap V_{1}$ which are the orthogonal projections onto the geometric eigenspaces 
because $\wh{U}(z)$ is a unitary matrix for $z \in T^{d}$ . For $R_{j}(z)$, we have the following.

\begin{lem}\label{unitP1}
In the above, $R_{j}(z)$ $(1 \leq j \leq K)$ is extended to $T^{d} \setminus D$ as 
a smooth orthogonal projection-valued bounded function, and it is the orthogonal projection onto 
the eigenspace corresponding to the eigenvalue $\omega_{j}$ of $\wh{U}(z)$ for each $z \in T^{d} \setminus D$. 
\end{lem}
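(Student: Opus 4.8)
The plan is to define $R_{j}(z)$ globally on $T^{d} \setminus D$ by the same Riesz contour integral as in \eqref{cont}, exploiting the fact that $\omega_{j}$ is a \emph{constant} eigenvalue of $\wh{U}(z)$, and then to verify smoothness, the orthogonal-projection property, and boundedness in turn. First I would observe that for every $z \in T^{d} \setminus D$ the matrix $\wh{U}(z)$ has $K+L$ distinct eigenvalues (since $z \notin D$ means $\pi(\zeta,z)$ has no multiple roots), so $\omega_{j}$ is separated by a positive distance from all the other eigenvalues. Hence one may choose a small circle $\Gamma$ in $\mb{C}$ enclosing $\omega_{j}$ but no other eigenvalue of $\wh{U}(z)$, and set
\[
R_{j}(z) = \frac{1}{2\pi i}\int_{\Gamma}(\zeta -\wh{U}(z))^{-1}\,d\zeta.
\]
By the standard theory of the Riesz projection (\cite[Chapter 1]{Ka}) this integral is independent of the choice of such a contour, so it defines a single function on all of $T^{d} \setminus D$, which agrees on each neighborhood $V_{1}$ with the $R_{j}$ produced in Lemma $\ref{eigenP}$.

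The crucial point, and the only place I expect real difficulty, is that this global definition is unambiguous. Because $\omega_{j}$ does not depend on $z$, the contour $\Gamma$ can be fixed once and for all around $\omega_{j}$, so there is no monodromy as $z$ traverses loops in the connected set $T^{d} \setminus D$; this is exactly what distinguishes the constant eigenvalues $\omega_{j}$ from the locally defined, possibly permuting branches $\lambda_{k}(z)$, for which no such global extension need exist. Thus the local eigenprojections of Lemma $\ref{eigenP}$ patch together into one well-defined function $R_{j}$ on $T^{d} \setminus D$, which is the content of the asserted extension.

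For smoothness I would fix $z_{o} \in T^{d} \setminus D$ and, using continuity of the eigenvalues, choose a contour $\Gamma$ that encloses $\omega_{j}$ and separates it from the remaining eigenvalues uniformly on a neighborhood of $z_{o}$. Since $\wh{U}(z)$ is a Laurent polynomial in $z$, hence real-analytic on $T^{d}$, and the resolvent $(\zeta -\wh{U}(z))^{-1}$ is jointly smooth in $(\zeta,z)$ for $\zeta \in \Gamma$, the integral depends smoothly on $z$. Finally, for $z \in T^{d}$ the matrix $\wh{U}(z)$ is unitary, hence normal, so its algebraic and geometric eigenspaces coincide and the Riesz projection onto the single eigenvalue $\omega_{j}$ equals the orthogonal projection onto the eigenspace $\ker(\wh{U}(z)-\omega_{j})$; in particular $R_{j}(z)^{*}=R_{j}(z)=R_{j}(z)^{2}$, which yields $\|R_{j}(z)\|_{{\rm op}} \leq 1$ and thereby the boundedness claimed in the lemma.
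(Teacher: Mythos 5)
Your proposal is correct and takes essentially the same approach as the paper: both define $R_{j}(z)$ globally by the Riesz contour integral $\eqref{cont}$ around the constant eigenvalue $\omega_{j}$ (single-valued precisely because $\omega_{j}$ does not depend on $z$), identify it locally with the eigenprojections of Lemma $\ref{eigenP}$ to obtain smoothness on $T^{d} \setminus D$, and use the fact that $\wh{U}(z)$ is unitary, hence normal, for $z \in T^{d}$ to conclude that $R_{j}(z)$ is the orthogonal projection onto the geometric eigenspace, which gives the boundedness. Your extra remarks on contour-independence and the uniform separation of $\omega_{j}$ from the other eigenvalues merely spell out details the paper leaves implicit.
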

\begin{proof}
For each fixed $z \in T_{\mb{C}}^{d}$, 
let $R_{j}(z)$ be the eigenprojection onto the algebraic eigenspace corresponding to the eigenvalue $\omega_{j}$ of $\wh{U}(z)$ 
defined by a contour integral as in $\eqref{cont}$. 
$R_{j}(z)$ is well-defined as a single-valued function on $T_{\mb{C}}^{d}$. 
By Lemma $\ref{eigenP}$, it is holomorphic on $T_{\mb{C}}^{d} \setminus D$, and hence its restriction to $T^{d} \setminus D$ 
is smooth. For $z \in T^{d}$, $\wh{U}(z)$ is a unitary matrix and hence $R_{j}(z)$ is the orthogonal projection onto 
the (geometric) eigenspaces corresponding to $\omega_{j}$. 
Thus, the operator norm of $R_{j}(z)$ for $z \in T^{d}$ is bounded. 
\end{proof}
\begin{lem}\label{unitP2}
Let $R_{j}(z)$ be the orthogonal projection-valued functions on $T^{d} \setminus D$ in Lemma $\ref{unitP1}$. 
Then, the orthogonal projection $\pi_{j}$ onto the eigenspace of $\ucal$ 
corresponding to the eigenvalue $\omega_{j}$ is given by the following formula. 
\[
(\pi_{j}w)(z)=R_{j}(z)w(z) \quad (w \in L^{2}(T^{d},\mb{C}^{D}),\,z \in T^{d} \setminus D). 
\]
\end{lem}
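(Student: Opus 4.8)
The plan is to show that the multiplication operator $M_{j}$ on $\hcal=L^{2}(T^{d},\mb{C}^{D})$ defined by $(M_{j}w)(z)=R_{j}(z)w(z)$ is an orthogonal projection whose range is precisely the eigenspace of $\ucal$ corresponding to $\omega_{j}$, and then to invoke the uniqueness of the orthogonal projection onto a given closed subspace to conclude $M_{j}=\pi_{j}$.

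First I would check that $M_{j}$ is a well-defined bounded operator. By Lemma \ref{unitP1}, $R_{j}(z)$ is defined and smooth on $T^{d}\setminus D$, and since $T^{d}\cap D$ has Lebesgue measure zero, the assignment $z \mapsto R_{j}(z)w(z)$ is measurable for every $w\in\hcal$; moreover each $R_{j}(z)$ is an orthogonal projection, so $\|R_{j}(z)\|_{{\rm op}}\le 1$ and hence $\|M_{j}w\|\le\|w\|$. The pointwise relations $R_{j}(z)^{2}=R_{j}(z)$ and $R_{j}(z)^{*}=R_{j}(z)$, valid for $z\in T^{d}\setminus D$, pass to the operator level to give $M_{j}^{2}=M_{j}$ and $M_{j}^{*}=M_{j}$, so that $M_{j}$ is an orthogonal projection on $\hcal$.

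Next I would identify the range of $M_{j}$ with the $\omega_{j}$-eigenspace of $\ucal$ by proving two inclusions. For the inclusion of the range in the eigenspace, I use that $R_{j}(z)$ projects onto the $\omega_{j}$-eigenspace of $\wh{U}(z)$, so that $\wh{U}(z)R_{j}(z)=\omega_{j}R_{j}(z)$ for $z\in T^{d}\setminus D$; applying the multiplication formula $\eqref{UT1}$ for $\ucal$ gives $(\ucal M_{j}w)(z)=\wh{U}(z)R_{j}(z)w(z)=\omega_{j}(M_{j}w)(z)$ almost everywhere, whence $\ucal M_{j}w=\omega_{j}M_{j}w$. For the reverse inclusion, suppose $\ucal w=\omega_{j}w$; then $\wh{U}(z)w(z)=\omega_{j}w(z)$ for almost every $z$, so $w(z)$ lies in the $\omega_{j}$-eigenspace of $\wh{U}(z)$ and therefore $R_{j}(z)w(z)=w(z)$ almost everywhere, i.e. $M_{j}w=w$. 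Thus the range of $M_{j}$ coincides with the eigenspace of $\ucal$ associated with $\omega_{j}$, and since two orthogonal projections with the same range are equal, $M_{j}=\pi_{j}$.

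The steps are all routine once Lemma \ref{unitP1} is in hand; the only point requiring care is the measure-theoretic bookkeeping, namely that discarding the null set $D$ does not affect any of the almost-everywhere identities, and that the pointwise projection property of $R_{j}(z)$ genuinely transfers to an operator identity on $L^{2}$. I expect this null-set handling, together with the verification that $z\mapsto R_{j}(z)$ is measurable, to be the main (if minor) obstacle.
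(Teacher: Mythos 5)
Your proof is correct and takes essentially the same route as the paper: both define the multiplication operator by $R_{j}(z)$ (bounded since $T^{d}\cap D$ is null and $R_{j}(z)$ is a pointwise orthogonal projection), show via Lemma \ref{unitP1} that its range lies in the $\omega_{j}$-eigenspace of $\ucal$ while every eigenvector is fixed pointwise, and conclude by uniqueness of the orthogonal projection onto a closed subspace. You simply make explicit the measurability, idempotence, and self-adjointness checks that the paper's terser argument leaves implicit.
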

\begin{proof}
Since $T^{d} \cap D$ has Lebesgue measure zero and since $R_{j}(z)$ is bounded on $T^{d} \setminus D$, 
the right-hand side of the above expression defines a bounded operator $R_{j}$ on $L^{2}(T^{d},\mb{C}^{D})$ 
which commutes with $\ucal$. By Lemma $\ref{unitP1}$, the image of $R_{j}$ is contained in that of $\pi_{j}$. 
But, it is obvious that, for $w \in {\rm Im}(\pi_{j})$, $w(z)$ is an eigenvector of $\wh{U}(z)$ with eigenvalue $\omega_{j}$ when $w(z) \neq 0$. 
Thus, we have $R_{j}w=w$, which completes the proof. 
\end{proof}

For each $z_{o} \in T^{d} \setminus D$, we take a small neighborhood $V_{z_{o}}$ such that $\ol{V_{z_{o}}}$ is a compact subset 
contained in a neighborhood of $z_{o}$ as in Lemmas $\ref{eigen1}$, $\ref{eigenP}$. 
We set $W_{z_{o}}=V_{z_{o}} \cap T^{d}$. We may assume that $W_{z_{o}}$ is connected. 
Let $\hcal(W_{z_{o}})$ be the closed subspace in $\hcal$ consisting 
of all $w \in L^{2}(T^{d},\mb{C}^{D})$ whose essential support is contained in $W_{z_{o}}$. 
It is obvious that the orthogonal projection onto $\hcal(W_{z_{o}})$ is given by the multiplication 
of the characteristic function of $W_{z_{o}}$, and hence it commutes with $\ucal$. 
Thus, the spectral projection of $\ucal$ also commutes with the orthogonal projection onto $\hcal(W_{z_{o}})$. 
Let $R_{j}(z)$, $P_{i}(z)$ ($1 \leq j \leq K$, $1 \leq i \leq L$) denote the projection-valued holomorphic function on $V_{z_{o}}$ 
given in Lemma $\ref{eigenP}$. As in \cite{Ka}, we have 
\[
\sum_{j=1}^{K}R_{j}(z) +\sum_{i=1}^{L}P_{i}(z)=I \quad (z \in V_{z_{o}}). 
\]
By Lemma $\ref{unitP2}$, for each $w \in \hcal(W_{z_{o}})$, we see 
\[
w(z)=(\pi_{p}w)(z)+\sum_{i=1}^{L}P_{i}(z)w(z)
\]
for almost all $z \in W_{z_{o}}$. Since $w=\pi_{p}w +\pi_{c}w$, we have 
\begin{equation}\label{pic1}
(\pi_{c}w)(z)=\sum_{i=1}^{L}P_{i}(z)w(z) \quad (w \in \hcal(W_{z_{o}})).
\end{equation}
From $\eqref{pic1}$, we obtain 
\begin{equation}
\ucal^{n}\pi_{c}w(z)=\sum_{i=1}^{L}\lambda_{i}(z)^{n}P_{i}(z)w(z) \quad (w \in \hcal(W_{z_{o}})) 
\end{equation}
for any integer $n$. Since the ring of Laurent polynomials on $S^{1}$ is dense in the space $C(S^{1})$ of 
continuous functions on $S^{1}$ with respect to the supremum norm, we see 
\begin{equation}\label{usp11}
f(\ucal)\pi_{c}w(z)=\sum_{i=1}^{L}f(\lambda_{i}(z)) P_{i}(z)w(z)\quad (w \in \hcal(W_{z_{o}}))
\end{equation}
for any $f \in C(S^{1})$. 
\begin{prop}\label{LOCAL}
Let $V_{z_{o}} \subset T_{\mb{C}^{D}}^{d} \setminus D$ and $W_{z_{o}} \subset T^{d} \setminus D$ be as above. 
Let $w \in \hcal(W_{z_{o}})$. Then the spectral measure $\|E(\cdot)\pi_{c}w\|^{2}$ of $\pi_{c}w$ is absolutely continuous 
with respect to the Lebesgue measure.
\end{prop}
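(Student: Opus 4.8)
The plan is to show that for $w \in \hcal(W_{z_o})$, the spectral measure of $\pi_c w$ is absolutely continuous by computing it explicitly via the local holomorphic diagonalization provided by Lemmas \ref{eigen1}, \ref{eigenP}, and the formula \eqref{usp11}. First I would express the spectral measure through continuous test functions: for any $f \in C(S^1)$, we have $\int_{S^1} f\,d\|E(\cdot)\pi_c w\|^2 = \ispa{f(\ucal)\pi_c w, \pi_c w}$. Using \eqref{usp11} and the fact that the $P_i(z)$ are orthogonal projections onto mutually orthogonal eigenspaces for $z \in T^d \setminus D$ (so $P_i(z)^* P_k(z) = \delta_{ik} P_i(z)$), I would obtain
\begin{equation}\label{specmeas}
\int_{S^1} f\,d\|E(\cdot)\pi_c w\|^2 = \sum_{i=1}^{L}\int_{W_{z_o}} f(\lambda_i(z))\,\|P_i(z)w(z)\|_{\mb{C}^D}^2\,d\nu_d(z).
\end{equation}
This identifies the spectral measure as the pushforward, under the eigenvalue maps $\lambda_i: W_{z_o} \to S^1$, of the absolutely continuous (with respect to $\nu_d$) measures $\|P_i(z)w(z)\|^2\,d\nu_d(z)$.

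The heart of the matter is then to verify that each pushforward measure $(\lambda_i)_*\bigl(\|P_i(\cdot)w(\cdot)\|^2\,d\nu_d\bigr)$ is absolutely continuous with respect to arclength on $S^1$. Since $\lambda_i$ is the restriction to $W_{z_o} \subset T^d$ of a holomorphic function on $V_{z_o}$, and takes values in $S^1$ because $\wh{U}(z)$ is unitary on $T^d$, I would argue via the structure of the level sets of $\lambda_i$. The key point is that $|\lambda_i(z)| \equiv 1$ on the real torus forces the map to behave well: writing $\lambda_i(z) = e^{i\theta_i(z)}$ locally with $\theta_i$ a smooth real-valued function, the image measure is carried by $d\theta$, and absolute continuity of the pushforward reduces to showing that the function $\theta_i$ has no ``flat'' level sets of positive measure weighting onto a single point. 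Concretely, if $A \subset S^1$ has arclength zero, then $\theta_i^{-1}(A)$ must have $\nu_d$-measure zero unless $\theta_i$ is constant on a set of positive measure; but if $\theta_i$ (equivalently $\lambda_i$) were constant on a positive-measure subset of $W_{z_o} \subset T^d$, then by Lemma \ref{analN} the holomorphic function $\lambda_i(z) - c$ would vanish identically on $V_{z_o}$, making $c$ an eigenvalue of $\wh{U}(z)$ for all $z$, hence an eigenvalue $\omega_j$ of $U$ by Theorem \ref{main0}(1). This contradicts the fact that $\lambda_i(z)$ parametrizes the continuous (non-eigenvalue) branches, which are distinct from all $\omega_j$.

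The main obstacle I anticipate is making the reduction from ``no constant branch'' to genuine absolute continuity fully rigorous, since the pushforward of a smooth density under a smooth but possibly degenerate map (where $d\theta_i$ may vanish on lower-dimensional sets) need not a priori be absolutely continuous. The clean way around this is to pass from the density statement to the behavior of the distribution function: I would fix $i$ and a compact $W_{z_o}$ on which $\lambda_i$ is smooth and bounded, and show that the non-constancy of $\lambda_i$, combined with real-analyticity of $\theta_i$ (inherited from holomorphy of $\lambda_i$ on $V_{z_o}$), prevents concentration. Specifically, for a real-analytic non-constant function the set $\{d\theta_i = 0\}$ is a proper analytic subvariety of measure zero, and on its complement $\lambda_i$ is a local diffeomorphism onto arcs of $S^1$, so the coarea formula gives \eqref{specmeas} a bounded density against $d\theta$ on each such chart. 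Shrinking $W_{z_o}$ if necessary and summing over a finite cover, I would conclude that the measure in \eqref{specmeas} is absolutely continuous. This establishes the proposition, and the global absolute continuity of the continuous spectrum (Theorem \ref{main0}(2)) will then follow by covering $T^d \setminus D$ by countably many such neighborhoods $W_{z_o}$ and decomposing an arbitrary $\pi_c w$ accordingly.
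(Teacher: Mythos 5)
Your proposal is correct and takes essentially the same route as the paper's proof: you express the spectral measure of $\pi_{c}w$ as the pushforward of the densities $\|P_{i}(z)w(z)\|_{\mb{C}^{D}}^{2}\,d\nu_{d}$ under the eigenvalue branches $\lambda_{i}$ via $\eqref{usp11}$, rule out constant branches (the paper contradicts the factorization $\eqref{decP2}$, you invoke Theorem $\ref{main0}$(1) together with simplicity of the roots off $D$ --- the same argument in substance), show the critical set of each $\lambda_{i}$ has measure zero by analyticity (the paper applies Lemma $\ref{analN}$ to the partial derivatives $\partial_{j}\lambda_{i}$, you cite the measure-zero property of zero sets of non-trivial real-analytic functions), and conclude with the coarea formula yielding an $L^{1}$ density on $S^{1}$. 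The differences are cosmetic bookkeeping, and you correctly identified and closed the one genuine pitfall (degenerate pushforwards of smooth densities) exactly as the paper does.
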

\begin{proof}
We first show that the gradient $\nabla \lambda_{k}$ of the eigenvalues $\lambda_{k}(z)$ ($1 \leq k \leq L$), considered as a 
function on $W_{z_{o}}$, does not vanish almost everywhere on $W_{z_{o}}$. 
To show this, let $(\theta_{1},\ldots,\theta_{d}) \in \mb{R}^{d}$ be a coordinates on $W_{z_{o}}$ 
defined so that $z=(e^{i\theta_{1}},\ldots,e^{i\theta_{d}}) \in W_{z_{o}}$. We denote $\partial_{\theta_{j}}$ the partial derivative 
with respect to $\theta_{j}$. Then $\partial_{\theta_{j}}\lambda_{k}=iz_{j}\partial_{j}\lambda_{k}$ on $W_{z_{o}}$, 
where $\partial_{j}\lambda_{k}$ denotes the derivative in $z_{j}$ of the holomorphic function $\lambda_{k}$. 
Let $N_{k} \subset W_{z_{o}}$ be the set of points $z \in W_{z_{o}}$ where $\nabla \lambda_{k}(z)=0$.  
Thus, $N_{k}$ is the intersection of the sets $(\partial_{j} \lambda_{k})^{-1}(0) \cap T^{d}$ for $j=1,\ldots,d$. 
If $N_{k}$ has positive Lebesgue measure on $T^{d}$, then by Lemma $\ref{analN}$, $\partial_{j}\lambda_{k}$ vanishes identically on $V_{z_{o}}$ 
for any $j=1,\ldots,d$. Therefore, $\lambda_{k}$ is constant on $V_{z_{o}}$. Now, we have $\pi(\lambda_{k},z)=0$ in $z \in V_{z_{o}}$. 
By the identity theorem for holomorphic functions, $\pi(\lambda_{k},z)=0$ for any $z \in T_{\mb{C}}^{d}$. 
This means that the irreducible monic polynomial $\zeta -\lambda_{k} \in k_{d}[\zeta]$ divides $\pi$, which is a contradiction 
to the fact that each $\pi_{\rho}$ in $\eqref{decP2}$ is not of the form $\zeta -\lambda$. 
Therefore, $\nabla \lambda_{k}$ does not vanish almost everywhere on $W_{z_{o}}$. 

Now, let $f$ be a continuous function on $S^{1}$. Since $P_{i}(z)$ and $P_{j}(z)$ ($i \neq j$) are orthogonal to each other for $z \in W_{z_{o}}$, 
$\eqref{usp11}$ shows 
\[
\|f(\ucal)\pi_{c}w\|^{2}=\sum_{i=1}^{L}\int_{W_{z_{o}}} |f(\lambda_{i}(z))|^{2} \|P_{i}(z)w(z)\|_{\mb{C}^{D}}^{2}\,d\nu_{d}(z).
\]
As is shown in the above, we have $\nu_{d}(W_{z_{o}})=\nu_{d}(W_{z_{o}} \cap N_{i}^{c})$ where $N_{i}^{c}$ is the complement of $N_{i}$. 
The function $\lambda_{i}$ restricted to $W_{z_{o}} \cap N_{i}^{c}$ has no critical points, and hence 
for each $t \in S^{1}$, the set $\lambda_{i}^{-1}(t) \cap W_{z_{o}} \cap N_{i}^{c}$ is a smooth hyper-surface. 
For each $t \in S^{1}$ and $i=1,\ldots,K$, we set 
\[
\Gamma_{i}(t)=\int_{\lambda_{i}^{-1}(t) \cap W_{z_{o}} \cap N_{i}^{c}} 
\|P_{i}(z)w(z)\|_{\mb{C}^{D}}^{2}\,\frac{dS_{t}(z)}{|\nabla \lambda_{i}(z)|}
\]
if $\lambda_{i}^{-1}(t) \cap W_{z_{o}} \cap N_{i}^{c} \neq \emptyset$, 
where $dS_{t}$ denotes the volume element of the hyper-surface $\lambda_{i}^{-1}(t) \cap W_{z_{o}} \cap N_{i}^{c}$. 
Then the coarea formula (see \cite{Ch} or \cite[Appendix A]{JL}) shows that $\Gamma_{i}(t)$ is an $L^{1}$-function on $S^{1}$ and  
\[
\int_{S^{1}}|f(t)|^{2}\,d\|E(t)\pi_{c}w\|^{2}=\|f(\ucal)\pi_{c}w\|^{2}=\int_{S^{1}} |f(t)|^{2}\Gamma(t)\,d\nu_{1}(t),\quad 
\Gamma(t)=\sum_{i=1}^{L}\Gamma_{i}(t),  
\]
which proves that the measure $d\|E(t)\pi_{c}w\|^{2}$ is absolutely continuous with respect to $d\nu_{1}(t)$. 
\end{proof}

\vspace{10pt}

\noindent{\it Proof of Theorem $\ref{main0}$, {\rm (2)}.}\hspace{2pt}
For each $z_{o} \in T^{d} \setminus D$, we take a small neighborhood $V_{z_{o}}$ such that 
$\ol{V_{z_{o}}}$ is a compact set contained in a neighborhood of $z_{o}$ as in Lemmas $\ref{eigen1}$, $\ref{eigenP}$. 
We set, as before, $W_{z_{o}}=T^{d} \cap V_{z_{o}}$. 
Then $\{W_{z_{o}}\}_{z_{o} \in T^{d} \setminus D}$ is an open covering of $T^{d} \setminus D$. 
We can choose a locally finite countable open covering $\{W_{i}\}_{i=1}^{\infty}$ which is a refinement of a countable 
sub-covering of the covering $\{W_{z_{o}}\}_{z_{o} \in T^{d} \setminus D}$. 
We take a partition of unity $\{\rho_{i}\}_{i=1}^{\infty}$ subordinated to the covering $\{W_{i}\}_{i=1}^{\infty}$. 
Then, the sum $\dsp \sum_{i=1}^{\infty}M_{i}$ converges in the strong operator topology and equals the identity operator on $\hcal$. 
Let $w \in \hcal$. Let $\Lambda \subset S^{1}$ be a Borel subset having Lebesgue measure zero. 
Since $\ucal$ commutes with the operator $M_{i}$ defined by the multiplication by $\rho_{i}$, 
the spectral measure $E(\Lambda)$ and the projection $\pi_{c}$ also commute with $M_{i}$. 
Therefore, we obtain 
\[
\ispa{E(\Lambda)\pi_{c}w,\pi_{c}w}=\sum_{i=1}^{\infty}\int_{T^{d} \setminus D}\|E(\Lambda)\pi_{c}(\rho_{i}^{1/2}w)(z)\|_{\mb{C}^{D}}^{2}\,d\nu_{d}(z), 
\]
Now Proposition $\ref{LOCAL}$ shows that $E(\Lambda)\pi_{c}(\rho_{i}^{1/2}w)=0$ in $\hcal$. Therefore, we have $E(\Lambda)\pi_{c}w=0$, 
which completes the proof of Theorem $\ref{main0}$, (2). 
\hfill$\square$

\vspace{30pt}

\end{document}